\newcommand{\st}[1][ ]{\ #1|\ }
\newcommand{\ceil}[1]{{\left\lceil #1 \right\rceil}}
\newcommand{\floor}[1]{{\left\lfloor #1 \right\rfloor}}
\newcommand*{\defeq}{\mathrel{\vcenter{\baselineskip0.5ex \lineskiplimit0pt
                     \hbox{\scriptsize.}\hbox{\scriptsize.}}}%
                     =}
\newcommand*{\eqdef}{=\mathrel{\vcenter{\baselineskip0.5ex \lineskiplimit0pt
                     \hbox{\scriptsize.}\hbox{\scriptsize.}}}}
\newcommand{\fdfrac}[2]{\mbox{\footnotesize$\displaystyle\frac{#1}{#2}$}}
\def\eps{\varepsilon}
\def\NN{{\mathbb N}}
\def\ZZ{{\mathbb Z}}
\def\RR{{\mathbb R}}
\def\HH{{\mathbb H}}
\DeclareMathOperator{\id}{id}
\DeclareMathOperator{\diam}{diam}
\DeclareMathOperator{\cay}{Cay}
\DeclareMathOperator{\vol}{Vol}
\definecolor{darkred}{RGB}{180,0,0}
\definecolor{darkblue}{RGB}{0,0,180}
\newtheorem{thm}{Theorem}[section]
\newtheorem{lem}[thm]{Lemma}
\crefname{lem}{Lemma}{Lemmata}
\newtheorem{prop}[thm]{Proposition}
\crefname{prop}{Proposition}{Propositions}
\newtheorem{cor}[thm]{Corollary}
\crefname{cor}{Corollary}{Corollaries}
\theoremstyle{definition}
\newtheorem{defi}[thm]{Definition}
\newtheorem{longEx}[thm]{Example}
\crefname{longEx}{Example}{Examples}
\newtheorem*{claim}{Claim}
\theoremstyle{remark}
\newtheorem{longRem}[thm]{Remark}
\title{Coarse entropy of metric spaces}
\author{William Geller}
\address{Department of Mathematical Sciences, Indiana University Indianapolis, 402 N. Blackford
  Street, Indianapolis, IN 46202}
\email{wgeller@iu.edu}
\author{Micha{\l} Misiurewicz}
\address{Department of Mathematical Sciences, Indiana University Indianapolis, 402 N. Blackford
  Street, Indianapolis, IN 46202}
\email{mmisiure@iu.edu}
\author{Damian Sawicki}
\address{KU Leuven, Department of Mathematics,
Celestijnenlaan 200b – box 2400, 3001 Leuven, Belgium}
\urladdr{https://sites.google.com/view/damiansawicki}
\thanks{This work was partially supported by grant number 426602
  from the Simons Foundation to M.M. D.S.\ was supported by the FWO research project G090420N of the Research Foundation Flanders.}
\keywords{Coarse entropy, topological entropy, volume growth, quasi-isometry, coarse equivalence}
\subjclass[2020]{51F30, 37B40}
\date{}
\begin{document}
\begin{abstract}
Coarse geometry studies metric spaces on the large scale. The recently introduced notion of coarse entropy is a tool to study dynamics from the coarse point of view. We prove that all isometries of a given metric space have the same coarse entropy and that this value is a coarse invariant. We call this value the coarse entropy of the space and investigate its connections with other properties of the space. We prove that it can only be either zero or infinity, and although for many spaces this dichotomy coincides with the subexponential--exponential growth dichotomy, there is no relation between coarse entropy and volume growth more generally. We completely characterise this dichotomy for spaces with bounded geometry and for quasi-geodesic spaces. As an application, we provide an example where coarse entropy yields an obstruction for a coarse embedding, where such an embedding is not precluded by considerations of volume growth.
\end{abstract}

\maketitle

\section{Introduction}

In~\cite{GM}, for a metric space $X$ and a map $f\colon X\to X$, the two
first-named authors defined the \emph{coarse entropy} of $f$. The
definition mimics one of the standard definitions of topological
entropy for continuous maps of compact metric spaces. However, it is
adapted to the ideas of coarse geometry, so instead of observing our
dynamical system through a better and better microscope, we use better
and better binoculars, looking from the other end.
Moreover, in coarse geometry one observes only approximate
positions of points, so instead of orbits we consider approximate
orbits, or \emph{pseudoorbits}. We measure the exponential growth
(in the length of the pseudoorbit) of the number of pseudoorbits
that we can distinguish.

In the study of a metric space, its group of isometries is often instrumental. We prove that for a fixed metric space $X$, the coarse entropies of all isometries of $X$ are equal, and in fact the same holds for all isometric embeddings $X\to X$. We call this value the coarse entropy of $X$. Furthermore, we prove that the coarse entropy of a metric space is a coarse invariant, that is, if $X$ and $Y$ are coarsely equivalent, then their coarse entropies must coincide.

As a consequence, coarse entropy can serve as an obstruction for coarse embeddings. For example, \cref{second-example} provides an infinite graph that might be considered as small as possible because its volume growth is linear. On the other hand, \cref{unboundedZero} provides a weighted graph that does not have bounded geometry, so it might be considered large. Interestingly, we prove that the coarse entropy is infinite for the former and zero for the latter, so the former does not admit a coarse embedding into the latter.

An advantage of coarse entropy is that it is defined for an arbitrary metric space.  In particular, we do not need a measure in our definition (cf.~\cite{HMT}). Furthermore, as opposed to some other invariants of metric spaces like Hilbert-space compression, coarse entropy is not merely an invariant of quasi-isometry, but an invariant of coarse equivalence.

While the topological entropy of an isometry of a compact metric space always vanishes, as we have already mentioned the coarse entropy of an unbounded metric space may be infinite. In fact, in \cref{sec:entro} we prove the dichotomy that the coarse entropy of a metric space can only be either zero or infinity.

We investigate connections between the value of the coarse entropy of the
space and other properties of the space. 
In \cref{sec:metric}, we introduce fundamental definitions of metric geometry and prove auxiliary results. 
In \cref{sec:examples}, we provide our main examples, demonstrating that spaces of exponential growth may have vanishing coarse entropy (\cref{s:ze}), spaces without bounded geometry may have vanishing coarse entropy as well (\cref{s:zu}), while spaces of linear volume growth may have infinite coarse entropy (\cref{s:is}).

Despite the existence of such examples, in~\cref{sec:theorems} we manage to prove theorems establishing the value of coarse entropy for large classes of spaces.
In particular, in \cref{sec:boundedGeometry} we completely characterise the dichotomy between zero and infinite coarse entropy for metric spaces with bounded geometry. In~\cref{sec:unboundedGeometry}, we treat metric spaces without bounded geometry
at the price of the additional assumption that they are quasi-geodesic.
As a corollary, we obtain a complete characterisation of coarse entropy for all quasi-geodesic spaces (with and without bounded geometry).

\section{The coarse entropy of a metric space}\label{sec:entro}

Let us start with the definition of the coarse entropy \cite{GM} of a map
$f\colon X\to X$, where $(X,d)$ is a metric space. It is defined as
\begin{equation}\label{firstInThePaper}
h_\infty(f)=\lim_{\delta\to\infty} \lim_{R\to\infty}
\limsup_{n\to\infty} \fdfrac1n \log s(f,n,R,\delta,x_0),
\end{equation}
where $s(f,n,R,\delta,x_0)$ is the supremum of cardinalities of
$R$-separated subsets of the set $P(f,n,\delta,x_0)$ of \emph{$\delta$-pseudo\-orbits} of $f$ of length $n$
starting at $x_0$. As usual, a $\delta$-pseudoorbit of $f$ of length $n$
starting at $x_0$ is a sequence $(x_0,x_1,\dots,x_n)$ such that
$d(f(x_i),x_{i+1})\le\delta$ for $i=0,1,\dots,n-1$. A set is \emph{$R$-separated} if the distance between each
two distinct elements of this set is at least $R$, and the distance
between two pseudo\-orbits $(x_0,x_1,\dots,x_n)$ and
$(y_0,y_1,\dots,y_n)$ is the maximum of the distances $d(x_i,y_i)$ over
$i=0,1,\dots,n$.
The value of $h_\infty(f)$ in the above definition does not depend on the
choice of $x_0\in X$.

We will always assume that isometries are bijective. Not necessarily surjective distance-preserving maps will be called \emph{isometric embeddings}.

If for sequences $u = (z_0, \ldots z_n)$ and $u' = (z'_0, \ldots z'_{n'})$ we have $z_n = z'_0$, then by $u * u'$ we will denote their concatenation $(z_0, \ldots z_n, z'_1, \ldots z'_{n'})$.

\begin{thm}\label{isomEmbedding}
For a metric space $X$, the coarse entropy of any isometric embedding $f\colon X\to X$ is the same.
\end{thm}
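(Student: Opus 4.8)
The plan is to show that every isometric embedding $f\colon X\to X$ has the same coarse entropy as the identity $\id_X$; since $\id_X$ is a fixed reference, this forces all isometric embeddings to share the common value $h_\infty(\id_X)$. I would prove the two inequalities $h_\infty(\id_X)\le h_\infty(f)$ and $h_\infty(f)\le h_\infty(\id_X)$ by exhibiting, at each level $n$, explicit distance-preserving maps between the relevant sets of pseudoorbits. The guiding idea is that an isometric embedding can be ``absorbed'' into a pseudoorbit by applying an appropriate power of $f$ to each coordinate, and that such a reindexing is automatically an isometry for the sup-metric on pseudoorbits, hence preserves $R$-separation exactly.

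For the forward inequality I would use the map $\Psi\colon(x_0,\dots,x_n)\mapsto(x_0,f(x_1),\dots,f^n(x_n))$. If $(x_0,\dots,x_n)$ is a $\delta$-pseudoorbit of $\id_X$ from $x_0$, then writing $z_i=f^i(x_i)$ one has $d(f(z_i),z_{i+1})=d\!\left(f^{i+1}(x_i),f^{i+1}(x_{i+1})\right)=d(x_i,x_{i+1})\le\delta$, so $\Psi$ sends $\id_X$-pseudoorbits to $f$-pseudoorbits, fixes the basepoint, is injective, and satisfies $d(z_i,z_i')=d(x_i,x_i')$; thus $s(\id_X,n,R,\delta,x_0)\le s(f,n,R,\delta,x_0)$ and $h_\infty(\id_X)\le h_\infty(f)$. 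For the reverse direction I would use the ``time-reversed'' map $\Theta_n\colon(z_0,\dots,z_n)\mapsto(f^n(z_0),f^{n-1}(z_1),\dots,z_n)$. Setting $y_i=f^{n-i}(z_i)$, the isometric-embedding property gives $d(y_i,y_{i+1})=d\!\left(f^{n-i-1}(f(z_i)),f^{n-i-1}(z_{i+1})\right)=d(f(z_i),z_{i+1})\le\delta$, so $\Theta_n$ carries each $\delta$-pseudoorbit of $f$ to a $\delta$-pseudoorbit of $\id_X$, again preserving all pairwise distances; hence $s(f,n,R,\delta,x_0)\le s(\id_X,n,R,\delta,f^n(x_0))$.

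The hard part will be that $\Theta_n$ moves the basepoint from $x_0$ to $f^n(x_0)$, and $d(x_0,f^n(x_0))$ may grow linearly in $n$, so the usual bounded-distance basepoint-independence argument (which only costs an additive change of $\delta$) is not directly applicable. To overcome this I would reconnect the basepoint using the orbit of $f$ itself. Writing $D_0=d(x_0,f(x_0))$ and $s=\floor{\delta/D_0}$ (valid once $\delta\ge D_0$, which is eventually the case), the points $x_0,f^{s}(x_0),f^{2s}(x_0),\dots,f^n(x_0)$ form an $\id_X$-$\delta$-pseudoorbit $\gamma_n$ from $x_0$ to $f^n(x_0)$ of length about $nD_0/\delta$, since $d\!\left(f^{js}(x_0),f^{(j+1)s}(x_0)\right)=d(x_0,f^{s}(x_0))\le sD_0\le\delta$. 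Prepending $\gamma_n$ via concatenation to each pseudoorbit produced by $\Theta_n$ keeps them $R$-separated (they still differ in the terminal block) and yields $s(f,n,R,\delta,x_0)\le s\!\left(\id_X,m_n,R,\delta,x_0\right)$ with $m_n/n\to 1+1/s=:\rho_\delta$. Passing to the growth rate gives $\limsup_n\frac1n\log s(f,n,R,\delta,x_0)\le\rho_\delta\,\limsup_m\frac1m\log s(\id_X,m,R,\delta,x_0)$, and since $\rho_\delta\to 1$ as $\delta\to\infty$, taking the iterated limits in $R$ and then $\delta$ yields $h_\infty(f)\le h_\infty(\id_X)$. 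Combining the two inequalities gives $h_\infty(f)=h_\infty(\id_X)$ for every isometric embedding $f$, which is the claim.
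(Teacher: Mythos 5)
Your proposal is correct, and its first half (the map $\Psi$, the time-reversal $\Theta_n$, and the observation that both are isometric for the sup-metric on pseudoorbits) coincides exactly with the paper's argument. Where you genuinely diverge is in handling the basepoint shift from $x_0$ to $f^n(x_0)$. The paper prepends the full orbit segment $(x_0,f(x_0),\dots,f^{n}(x_0))$, which is a $\delta$-pseudoorbit of $\id_X$ of length $n$ once $\delta\ge d(x_0,f(x_0))$; this doubles the length of the resulting paths, and the factor of $2$ is then absorbed by invoking the power formula $h_\infty(f^2)=2h_\infty(f)$ from Theorem~3.6 of the earlier paper of Geller--Misiurewicz (together with a case split on $h_\infty(f)>0$ and an approximation by constants $C$ with $\log C<2h_\infty(f)$). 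You instead compress the connecting path by taking $s=\floor{\delta/D_0}$-fold steps along the orbit, so the total length is inflated only by the factor $\rho_\delta=1+1/s\to 1$ as $\delta\to\infty$, and the inequality $h_\infty(f)\le h_\infty(\id_X)$ falls out of the iterated limits directly. Your route is more self-contained (no external power formula, no case split), at the cost of slightly more careful bookkeeping with the subsequence $m_n$ and the factor $\rho_\delta$; your limit manipulation is valid because $s(\cdot)$ is monotone in $\delta$ and $R$, and the case $h_\infty(\id_X)=\infty$ is vacuous. The only cosmetic gap is the degenerate case $D_0=0$ (when $x_0$ is a fixed point of $f$), where $\floor{\delta/D_0}$ is undefined but no connecting path is needed at all; you should note this explicitly.
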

\begin{proof}
Note that the value of $s(f,n,R,\delta,x_0)$ depends only on $R$ and the metric properties of the set $P(f,n,\delta,x_0)$. Let $f\colon X\to X$ be any isometric embedding and $\id_X\colon X\to X$ be the identity map and consider the map $\widetilde f \colon P(\id_X, n,\delta,x_0) \to P(f,n,\delta,x_0)$ given by 
\begin{equation*}
\widetilde f(x_0, \ldots, x_n) = (x_0, f(x_1), \ldots, f^n(x_n)).
\end{equation*}
The map $\widetilde f$ indeed maps $\delta$-pseudoorbits of $\id_X$ to $\delta$-pseudoorbits of $f$ because for any $i=0,\ldots, n-1$ we have
\[d(x_i, x_{i+1})\leq \delta \iff d\left(f\left(f^i(x_i)\right), f^{i+1}(x_{i+1})\right)\leq \delta,\] 
and  $\widetilde f$ is an isometric embedding because
\[\max_{i\leq n} d(x_i, y_i) = \max_{i\leq n} d\left(f^i(x_i), f^i(y_i)\right).\]
Hence, $P(\id_X, n,\delta,x_0)$ embeds isometrically into $P(f,n,\delta,x_0)$, and thus $s(\id_X, n,R,\delta,x_0)$ is at most $s(f,n,R,\delta,x_0)$, so $h_\infty(\id_X)\leq h_\infty(f)$.
(It can be observed that when $f$ is an isometry, then $\widetilde f$ is an isometry, which yields the desired equality $h_\infty(\id_X)=h_\infty(f)$ in this special case.)

It remains to show that $h_\infty(\id_X) \geq h_\infty(f)$. If $h_\infty(f) = 0$, there is nothing to prove, so let us assume that $h_\infty(f) > 0$. Let $C>1$ be such that $\log C < h_\infty(f)$. There is $\delta > 0$ such that for every $R > 0$ there is a sequence $(n_k)$ such that $P(f, n_k, \delta, x_0)$ contains an $R$-separated subset $S_k$ of cardinality at least $C^{n_k}$. Consider the map $F\colon P(f,n_k,\delta,x_0) \to P(\id_X, n_k,\delta,f^{n_k}(x_0))$ given by
\[F(x_0, \ldots, x_n) = (f^{n_k}(x_0), f^{n_k-1}(x_1), \ldots, x_n). \]
Similarly as above, it is easy to see that $F$ is a well-defined isometric embedding.

Now let $D=d(x_0, f(x_0))$. We can assume that $\delta \geq D$. Since $f$ is an isometric embedding, we have $D=d(f^i(x_0), f^{i+1}(x_0))$, and hence $p=(x_0, f(x_0), \ldots, f^{n_k}(x_0))$ is a $D$-pseudoorbit of $\id_X$. Consequently, the set $\{p*q \st q\in F(S_k)\}$ of concatenations of $p$ with $\delta$-pseudoorbits in $F(S_k)$ is a subset of $P(\id_X, 2n_k,\delta,x_0)$. Clearly, it is $R$-separated and has cardinality at least $C^{n_k}$. It follows that $h_\infty (\id_X) \geq \frac1 2 \log C$, and since $C>1$ satisfying $\log C < h_\infty(f)$ was arbitrary, we conclude $h_\infty (\id_X) \geq \frac{1}{2} h_\infty(f)$. By Theorem~3.6 of~\cite{GM} we obtain that $h_\infty(f^2) = 2h_\infty(f) > 0$ because an isometric embedding is in particular a controlled map (in the sense of~\cite{GM}). By applying the inequality from the penultimate sentence to $f^2$ instead of $f$, we conclude $h_\infty(\id_X) \geq \frac{1}{2} h_\infty(f^2) = h_\infty(f)$.
\end{proof}

By \cref{isomEmbedding}, the following is well defined.

\begin{defi}\label{mainDefinition} Let $X$ be a metric space. The \emph{coarse entropy} of $X$, denoted $h_\infty(X)$, equals $h_\infty(f)$, where $f\colon X\to X$ is any isometric embedding.
\end{defi}

Let us start by discussing some examples.
It follows from \cite{GM}*{Lemma~4.2} that the coarse entropy of any finite-dimensional normed vector space over $\RR$ equals zero. Infinite-dimensional vector spaces provide the simplest class of examples of spaces with infinite coarse entropy. We prove that their coarse entropy is infinite in \cref{lt1} using the following lemma.

\begin{lem}\label{ll1}
Let $X$ be a normed vector space over $\RR$ of infinite dimension. Then the unit
sphere in $X$ contains an infinite $1$-separated set.
\end{lem}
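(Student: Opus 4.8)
The plan is to build the separated set one point at a time by a version of the Riesz lemma, with the crucial refinement that when the subspace involved is finite-dimensional the Riesz constant can be taken to be exactly $1$ rather than merely some $\theta<1$. So I would set up an induction. Choose any unit vector $x_1$. Having chosen unit vectors $x_1,\dots,x_n$, let $Y_n=\operatorname{span}(x_1,\dots,x_n)$, which is a finite-dimensional, hence closed, subspace; since $X$ is infinite-dimensional, $Y_n$ is a \emph{proper} subspace. The heart of the argument is then to produce a unit vector $x_{n+1}$ whose distance $\inf_{y\in Y_n}\|x_{n+1}-y\|$ to $Y_n$ is at least $1$. Granting this, the resulting sequence $(x_n)$ lies on the unit sphere and is $1$-separated: for any $i<j$ we have $x_i\in Y_{j-1}$, so $\|x_j-x_i\|\ge\inf_{y\in Y_{j-1}}\|x_j-y\|\ge 1$. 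In particular the $x_n$ are pairwise distinct, so they form an infinite $1$-separated subset of the unit sphere.

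It remains to carry out the key step, and this is where I expect the only genuine difficulty. Pick any $z\in X\setminus Y_n$ and set $d=\inf_{y\in Y_n}\|z-y\|$, which is positive because $Y_n$ is closed and $z\notin Y_n$. The standard Riesz lemma yields only a constant $\theta<1$ precisely because the infimum defining $d$ need not be attained; but here $Y_n$ is finite-dimensional, and I would argue that the infimum \emph{is} attained at some $y_0\in Y_n$. Indeed, the function $y\mapsto\|z-y\|$ is continuous on $Y_n$ and its infimum over the closed, bounded set $\{y\in Y_n:\|z-y\|\le d+1\}$ equals $d$; as $Y_n$ is finite-dimensional this set is compact, so a minimiser $y_0$ exists, with $\|z-y_0\|=d$. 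Setting $x_{n+1}=(z-y_0)/d$, which has norm $1$, I then compute for arbitrary $y\in Y_n$ that
\[
\|x_{n+1}-y\|=\frac1d\bigl\|z-(y_0+dy)\bigr\|\ge\frac{d}{d}=1,
\]
since $y_0+dy\in Y_n$ and every element of $Y_n$ lies at distance at least $d$ from $z$. Thus $\inf_{y\in Y_n}\|x_{n+1}-y\|\ge 1$, completing the inductive step.

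The main obstacle, as signalled above, is upgrading the Riesz constant from $\theta<1$ to exactly $1$; everything else is bookkeeping. I would stress that this upgrade rests essentially on the finite-dimensionality of $Y_n$, via the attainment of the distance to it, and this is exactly what the inductive construction guarantees at every stage even though the ambient space $X$ is infinite-dimensional. (It is worth noting that stronger statements—\emph{strict} separation $\|x_i-x_j\|>1$, or even $(1+\eps)$-separation—hold by theorems of Kottman and of Elton--Odell, but they are not needed here, and the elementary finite-dimensional Riesz argument suffices for the claimed $1$-separation.)
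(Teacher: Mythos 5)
Your proof is correct and takes essentially the same approach as the paper's: both run an induction in which the key step is that the distance from a point to the finite-dimensional span of the points chosen so far is \emph{attained} (by compactness/coercivity), so that normalising the difference vector yields a unit vector at distance at least $1$ from the whole span. The paper's rescaling computation with the function $\xi_{x''}$ is just a different packaging of your inequality $\|x_{n+1}-y\|=\frac1d\bigl\|z-(y_0+dy)\bigr\|\ge 1$.
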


\Cref{ll1} is a well-known fact in functional analysis. We provide an elementary proof for convenience.
\begin{proof}[Proof of \cref{ll1}]
We will use induction. Fix $n\ge 0$. Suppose that $E_n$ is a
$1$-separated subset of the unit sphere, consisting of $n$ elements.
Let $H$ be the linear subspace of $X$ spanned by $E_n$. For any
$x\in X$ define a function $\xi_x\colon H\to\RR$ by $\xi_x(y)=\|x-y\|$. Fix
$x\in X\setminus H$. Since $H$ is finite dimensional, closed bounded
subsets of $H$ are compact. Moreover, $\xi_x$ is continuous and
$\xi_x(y)$ goes to infinity as $\|y\|$ goes to infinity. Therefore,
$\xi_x$ attains its infimum at some point $y'\in H$. Set $x'=x-y'$.
Then $\xi_{x'}$ attains its infimum at $0$. Set $x''=x'/\|x'\|$. Then
for every $y\in H$
\[
\xi_{x''}(y)=\frac{\xi_{x'}(\|x'\|y)}{\|x'\|},
\]
so $\xi_{x''}$ attains its infimum at $0$. Since
$\xi_{x''}(0)=\|x''\|=1$, we get $\|x''-y\|=\xi_{x''}(y)\ge 1$ for
every $y\in E_n$. Therefore, the set $E_{n+1}=E_n\cup\{x''\}$ is
$1$-separated. Hence, $\bigcup_{n=1}^\infty E_n$ is an infinite
$1$-separated subset of the unit sphere in $X$.
\end{proof}

\begin{prop}\label{lt1}
Let $X$ be a normed vector space over $\RR$ of infinite dimension. Then its
coarse entropy is infinite.
\end{prop}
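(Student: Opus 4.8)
The plan is to compute $h_\infty(X)$ directly by taking $f=\id_X$, which is legitimate by \cref{mainDefinition} since $\id_X$ is an isometric embedding. For $f=\id_X$ a $\delta$-pseudoorbit starting at a fixed basepoint (say the origin) is simply a finite sequence of points in which consecutive points differ by at most $\delta$, and the distance between two pseudoorbits is the maximum over all times of the distance between corresponding points. I would therefore exhibit, for suitable $n$, a large family of such walks that are pairwise $R$-separated, and estimate its exponential growth rate in $n$.

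The construction exploits the infinite $1$-separated set $\{v_1,v_2,\dots\}$ on the unit sphere provided by \cref{ll1}. Fix $\delta,R>0$ and an integer $N\ge 2$, and set $L=\ceil{R/\delta}$. I would group time into blocks of length $L$: to each word $\mathbf d=(d_0,\dots,d_{m-1})\in\{1,\dots,N\}^m$ I associate the walk that, throughout block $b$, takes every step equal to $\delta v_{d_b}$. Each such step has norm exactly $\delta$, so this is a valid $\delta$-pseudoorbit of $\id_X$ of length $n=mL$, and there are $N^m$ of them.

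The crucial point is the separation estimate. If two words $\mathbf d\neq\mathbf d'$ first differ in block $p$, then at time $(p+1)L$ the contributions of the identical earlier blocks cancel, so the displacement between the two walks equals $\delta L\,(v_{d_p}-v_{d_p'})$, whose norm is at least $\delta L\ge R$ because the $v_j$ are $1$-separated. Hence the $N^m$ walks are pairwise $R$-separated, giving $s(\id_X,mL,R,\delta,0)\ge N^m$, and along the subsequence $n=mL$ this yields $\limsup_{n\to\infty}\tfrac1n\log s(\id_X,n,R,\delta,0)\ge\tfrac{\log N}{L}$.

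Finally, I would note that this lower bound holds for every $N$, whereas $L$ depends only on $R$ and $\delta$; letting $N\to\infty$ forces $\limsup_{n\to\infty}\tfrac1n\log s(\id_X,n,R,\delta,0)=+\infty$ for every fixed $R$ and $\delta$, and the two outer limits then give $h_\infty(X)=+\infty$. The main thing to get right is the order of the limits: the number of directions $N$ must be sent to infinity with $R$ and $\delta$ held fixed. This is precisely where infinite-dimensionality is essential, since in finite dimensions the unit sphere admits only boundedly many $1$-separated points, so $N$ would be capped and the analogous bound would instead decay to $0$ as $R\to\infty$, consistent with the vanishing of coarse entropy for finite-dimensional normed spaces.
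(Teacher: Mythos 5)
Your argument is correct. The blocks are genuine $\delta$-pseudoorbits of $\id_X$ (every step has norm exactly $\delta$), the cancellation of the common prefix at time $(p+1)L$ really does leave a displacement $\delta L\,(v_{d_p}-v_{d_p'})$ of norm at least $\delta L\ge R$, so the $N^m$ walks are pairwise $R$-separated, and your insistence on sending $N\to\infty$ with $R$ and $\delta$ fixed is exactly the point where infinite-dimensionality enters. The paper starts from the same key lemma, \cref{ll1}, but packages the count more economically: after rescaling, the $R$-ball contains an \emph{infinite} $R$-separated set $A$, and for each $a\in A$ the single radial path $(0,a/n,2a/n,\dots,a)$ is a $\delta$-path as soon as $n\delta>R$; these paths are already $R$-separated at their endpoints, so $s(n,R,\delta,0)=\infty$ for every sufficiently large $n$ and no concatenation, no block structure, and no auxiliary parameter $N$ are needed. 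What your version buys is a family of \emph{finite} separated sets realizing an explicit exponential rate $\tfrac{\log N}{L}$ for each $N$, which is essentially the concatenation mechanism the paper deploys later (in \cref{main-volume} and \cref{graphUnbounded}) in situations where one cannot place infinitely many separated points in a single ball; for the present proposition the one-step fan of radial paths suffices.
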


\begin{proof}
Fix $\delta, R>0$, and assume that $n\in \NN$ is sufficiently large that $n\delta > R$. By \cref{ll1}, the $R$-ball in $X$ contains an infinite $R$-separated subset $A$. For every $a\in A$, the sequence $p_a\defeq (0, \frac{a}{n}, \frac{2a}{n}, \ldots, a)$ is a $\delta$-pseudoorbit of $\id_X$, and the family $\{p_a \st a \in A\}$ is clearly $R$-separated.
Therefore the coarse entropy of $X$ is infinite.
\end{proof}

Let us make three simple yet fundamental observations.
 
\begin{lem}\label{subspace}
If $X\subseteq Y$ are metric spaces, then $h_\infty(X)\leq h_\infty(Y)$.
\end{lem}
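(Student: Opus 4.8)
The plan is to reduce everything to the identity maps $\id_X$ and $\id_Y$, using \cref{isomEmbedding} together with \cref{mainDefinition} to identify $h_\infty(X) = h_\infty(\id_X)$ and $h_\infty(Y) = h_\infty(\id_Y)$. The entire argument then rests on comparing the pseudoorbit spaces of these two identity maps at a common base point, and on the fact that all the relevant quantities are computed purely from the metric.

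First I would observe that since $X \subseteq Y$ carries the restricted metric, a $\delta$-pseudoorbit of $\id_X$ is literally the same object as a $\delta$-pseudoorbit of $\id_Y$ that happens to lie in $X$. Indeed, a $\delta$-pseudoorbit of $\id_X$ starting at $x_0$ is a sequence $(x_0,\ldots,x_n)$ of points of $X$ with $d(x_i,x_{i+1})\le\delta$, and because the metric of $X$ is the restriction of that of $Y$, this same sequence is a $\delta$-pseudoorbit of $\id_Y$. Hence, fixing any $x_0\in X$, we obtain the inclusion $P(\id_X,n,\delta,x_0)\subseteq P(\id_Y,n,\delta,x_0)$.

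Next I would note that the distance between two pseudoorbits, namely the maximum of the coordinatewise distances, is likewise computed from the same restricted metric, so it agrees whether the pseudoorbits are viewed as elements of $P(\id_X,n,\delta,x_0)$ or of $P(\id_Y,n,\delta,x_0)$. Consequently, any $R$-separated subset of $P(\id_X,n,\delta,x_0)$ is an $R$-separated subset of $P(\id_Y,n,\delta,x_0)$, which yields
\[
s(\id_X,n,R,\delta,x_0)\le s(\id_Y,n,R,\delta,x_0)
\]
for every $n$, $R$, $\delta$ and every $x_0\in X$.

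Finally, I would pass to the limits defining coarse entropy. Since $h_\infty$ does not depend on the choice of base point, I may use a single base point $x_0\in X\subseteq Y$ for both spaces; taking $\limsup$ in $n$ and then the limits in $R$ and $\delta$ preserves the inequality, giving $h_\infty(\id_X)\le h_\infty(\id_Y)$, that is, $h_\infty(X)\le h_\infty(Y)$. There is essentially no hard step: the only point requiring a moment's care is that the two pseudoorbit spaces must be compared at a \emph{common} base point, which is precisely what the base-point independence of $h_\infty$ permits.
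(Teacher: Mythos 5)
Your proposal is correct and follows exactly the same route as the paper: reduce to the identity maps via \cref{isomEmbedding}, observe that $P(\id_X,n,\delta,x_0)\subseteq P(\id_Y,n,\delta,x_0)$ with the same induced distance on pseudoorbits, conclude $s(\id_X,n,R,\delta,x_0)\le s(\id_Y,n,R,\delta,x_0)$, and pass to the limits. The remark about using a common base point $x_0\in X\subseteq Y$, justified by base-point independence, is a nice explicit touch that the paper leaves implicit.
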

\begin{proof}
Let $x_0\in X$, and let $n\in \NN$, $R>0$, and $\delta>0$. Clearly, $P(\id_X, n, R, \delta, x_0) \subseteq P(\id_Y, n, R, \delta, x_0)$, so if $O$ is an $R$-separated subset of $P(\id_X, n, R, \delta, x_0)$, then it is an $R$-separated subset of $P(\id_Y, n, R, \delta, x_0)$. Hence, $s(\id_X, n, R, \delta, x_0) \leq s(\id_Y, n, R, \delta, x_0)$, and thus $h_\infty(X)\leq h_\infty(Y)$.
\end{proof}

\begin{lem}\label{coequiv}
If two spaces are coarsely equivalent, then they have the same
coarse entropy.
\end{lem}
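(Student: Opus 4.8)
The plan is to exploit the description of coarse equivalence by control functions and to push pseudoorbits forward along a coarse embedding. Since coarse equivalence is a symmetric relation — a coarse equivalence $X\to Y$ admits a coarse inverse $Y\to X$, which is itself a coarse embedding — it suffices to prove the single inequality $h_\infty(X)\le h_\infty(Y)$ and then apply the same argument to the inverse to obtain the reverse inequality, and hence equality.

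So fix a coarse embedding $f\colon X\to Y$, that is, a map for which there are control functions $\rho_-,\rho_+\colon[0,\infty)\to[0,\infty)$ with $\rho_-(t)\to\infty$ as $t\to\infty$ and $\rho_-(d_X(x,x'))\le d_Y(f(x),f(x'))\le\rho_+(d_X(x,x'))$ for all $x,x'\in X$. First I would push pseudoorbits forward coordinatewise, defining $\Phi(x_0,\dots,x_n)=(f(x_0),\dots,f(x_n))$. The upper control shows that $\Phi$ maps $P(\id_X,n,\delta,x_0)$ into $P(\id_Y,n,\rho_+(\delta),f(x_0))$, since $d_X(x_i,x_{i+1})\le\delta$ forces $d_Y(f(x_i),f(x_{i+1}))\le\rho_+(\delta)$. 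The lower control shows that $\Phi$ turns an $R$-separated set into a $\rho_-(R)$-separated set: if two pseudoorbits differ by at least $R$ in some coordinate, then their images differ by at least $\rho_-(R)$ in that coordinate, because separation is measured as the maximum over coordinates. For $R$ large enough that $\rho_-(R)>0$ this also makes $\Phi$ injective on such a set, so cardinalities are preserved. Consequently
\[
s(\id_X,n,R,\delta,x_0)\le s\bigl(\id_Y,n,\rho_-(R),\rho_+(\delta),f(x_0)\bigr).
\]

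It then remains to pass to the three nested limits defining $h_\infty$. Taking $\limsup_{n\to\infty}\frac1n\log(\cdot)$ preserves the inequality. For the limit $R\to\infty$ I would use that $s$ is non-increasing in the separation parameter, so the inner quantity is a monotone function of the separation; since $\rho_-(R)\to\infty$, letting $R\to\infty$ on the left matches letting the $Y$-separation tend to infinity on the right. Finally, for $\delta\to\infty$ I would use that $s$ is non-decreasing in $\delta$: the resulting $Y$-quantity at separation level governed by $\rho_+(\delta)$ is bounded above by its supremum over all $\delta$, which is exactly $h_\infty(\id_Y)=h_\infty(Y)$. Because $h_\infty$ does not depend on the basepoint, the shift from $x_0$ to $f(x_0)$ is harmless. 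This yields $h_\infty(X)=h_\infty(\id_X)\le h_\infty(Y)$.

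I expect the only genuine subtlety — the main obstacle — to be the careful bookkeeping of these three limits, and in particular the observation that one does not need $\rho_+$ to tend to infinity or to be invertible: the monotonicity of $s$ in $\delta$ together with the fact that $h_\infty(Y)$ is the supremum over $\delta$ absorbs whatever value $\rho_+(\delta)$ takes, while only the divergence $\rho_-(R)\to\infty$ is needed to transport the $R$-limit from $X$ to $Y$.
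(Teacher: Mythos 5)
Your proof is correct, but it takes a genuinely different route from the paper's. The paper disposes of this lemma in one line: a coarse equivalence makes the identity maps of the two spaces coarsely conjugate in the sense of~\cite{GM}, and Corollary~3.2 of~\cite{GM} then gives equality of the coarse entropies. You instead reprove the needed invariance from scratch, pushing pseudoorbits forward coordinatewise and tracking the control functions; this is essentially the argument hiding behind the cited corollary, specialised to identity maps. Your key inequality $s(\id_X,n,R,\delta,x_0)\le s\bigl(\id_Y,n,\rho_-(R),\rho_+(\delta),f(x_0)\bigr)$ is valid, and your handling of the three limits is sound: monotonicity of $s$ in the separation parameter together with $\rho_-(R)\to\infty$ transports the $R$-limit, the $\delta$-limit is absorbed by the supremum over $\delta$ on the $Y$-side, and basepoint independence takes care of the shift to $f(x_0)$. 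Two observations on what each approach buys. First, your argument never uses coarse density of $f(X)$, so you in fact prove directly that coarse entropy is monotone under arbitrary coarse embeddings --- which is exactly the content of \cref{coarselyMonotone}, obtained in the paper by combining \cref{subspace} and \cref{coequiv}; your symmetry step is then legitimate because a coarse inverse of a coarse equivalence is again a coarse equivalence, as the paper records after \cref{ce}. Second, the trade-off is self-containment versus brevity: your version avoids the external reference at the cost of redoing the limit bookkeeping that \cite{GM} already carries out for coarsely conjugate maps.
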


\begin{proof}
Clearly, if two spaces are coarsely equivalent (we recall the definition of coarse equivalence in \cref{ce}), then their identities
are coarsely conjugate in the sense of~\cite{GM}, so by Corollary~3.2
of~\cite{GM}, they have the same coarse entropy.
\end{proof}

\begin{lem}\label{zeroinf}
The coarse entropy of any metric space is either zero or infinity.
\end{lem}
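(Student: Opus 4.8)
The plan is to reduce the statement to the power-scaling property of coarse entropy that was already invoked in the proof of \cref{isomEmbedding}. By \cref{mainDefinition} we have $h_\infty(X) = h_\infty(\id_X)$, and the identity map $\id_X$ is an isometry, hence in particular a controlled map in the sense of~\cite{GM}. Thus Theorem~3.6 of~\cite{GM} applies to $f = \id_X$; in the form used above it asserts $h_\infty(f^2) = 2\,h_\infty(f)$.

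The crucial point I would then exploit is that $\id_X^2 = \id_X$, which makes the scaling formula self-referential. Substituting $f = \id_X$, I would obtain
\[
h_\infty(\id_X) = h_\infty(\id_X^2) = 2\,h_\infty(\id_X).
\]
Setting $t = h_\infty(X) = h_\infty(\id_X) \in [0,\infty]$, this is just the equation $t = 2t$, whose only solutions in the extended half-line are $t = 0$ and $t = \infty$: a finite positive value of $t$ would yield the contradiction $t = 2t$, while $t = \infty$ is consistent since $\infty = 2\cdot\infty$. This gives the claimed dichotomy.

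I do not expect any genuine obstacle here; the only hypothesis to check is that $\id_X$ is a controlled map, which is immediate. The entire content is the observation that applying the multiplicativity $h_\infty(f^2) = 2\,h_\infty(f)$ to the \emph{idempotent} identity map collapses it to the equation $t = 2t$. (Equivalently, using the general statement $h_\infty(f^k) = k\,h_\infty(f)$ one gets $t = k t$ for all $k \geq 1$, which forces the same conclusion.)
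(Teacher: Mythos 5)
Your proof is correct and is essentially identical to the paper's own argument: both invoke Theorem~3.6 of~\cite{GM} for the controlled map $\id_X$ and exploit the idempotence of the identity to get $t = kt$, forcing $t \in \{0,\infty\}$. The only cosmetic difference is that the paper states it for a general $k$-th iterate while you focus on $k=2$, which suffices equally well.
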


\begin{proof}
Clearly, the identity is a controlled map (in the sense of~\cite{GM}),
so by Theorem~3.6 of~\cite{GM}, the coarse entropy of its $k$-th
iterate is equal to $k$ times the coarse entropy of the identity.
However, any iterate of the identity is the identity itself, so its
coarse entropy must be zero or infinity.
\end{proof}

\Cref{isomEmbedding} and \cref{zeroinf} show in particular that the coarse entropy of any isometric self-embedding is either zero or infinity.

In the proofs of \cref{lt1} and \cref{subspace,zeroinf,coequiv}, we used the fact from \cref{isomEmbedding} that $h_\infty(X) = h_\infty(\id_X)$. Since we will often resort to the special case of the identity maps in the sequel, let us simplify terminology and notation. We
will call $\delta$-pseudoorbits of the identity \emph{$\delta$-paths}
and for $f=\id_X$ skip $f$ in the notation for $s(\cdot)$ and $P(\cdot)$. 
After these simplifications, formula \eqref{firstInThePaper} boils down to the following:
\begin{gather}\label{spaceEntropySeparated}
\begin{aligned}
h_\infty(X)&=\lim_{\delta\to \infty} \lim_{R \to \infty} \limsup_{n \to \infty} \fdfrac{1}{n} \log s(n,R,\delta,x_0), \text{ where}\\
s(n,R,\delta, x_0) &= \sup \{ |O| : O \subseteq P(n,\delta, x_0) \text{ is an }R\text{-separated subset}\}.
\end{aligned}
\end{gather}

Let $A\geq 0$, $X$ be a metric space, and $Z\subseteq X$ be a subset.
The subset~$Z$ is \emph{$A$-dense} (also known as \emph{$A$-spanning}) in~$X$ if for every $x\in X$ there is $z\in Z$ with $d(x,z) < A$.  When the constant~$A$ is not important, we will also say that~$Z$ is \emph{coarsely dense} in~$X$.
It was proved in \cite{GM} that in the definition of the coarse entropy of a map, $R$-separated sets can be replaced with $R$-dense sets. Namely, instead of the supremum $s(f,n,R,\delta,x_0)$ of the cardinalities of
$R$-separated sets of $\delta$-pseudo\-orbits of~$f$ of length~$n$
starting at~$x_0$, in formula \eqref{firstInThePaper}  one can take the minimum $r(f,n,R,\delta,x_0)$ of the cardinalities of
$R$-dense sets of $\delta$-pseudoorbits of~$f$ of length~$n$
starting at~$x_0$. In the special case of coarse entropy of spaces, we obtain
\begin{gather}\label{spaceEntropyDense}
\begin{aligned}
h_\infty(X)&=\lim_{\delta\to \infty} \lim_{R \to \infty} \limsup_{n \to \infty} \fdfrac{1}{n} \log r(n,R,\delta,x_0), \text{ where}\\
r(n,R,\delta, x_0) &= \min \{ |O| : O \subseteq P(n,\delta, x_0) \text{ is an }R\text{-dense subset}\}.
\end{aligned}
\end{gather}

\section{Metric spaces}\label{sec:metric}

We hope that the paper will be read by some people for whom some definitions in this section will be new, although an expert in coarse geometry can only skim it in order to learn our notation and conventions.

Throughout, the symbol $\NN$ will denote the set of positive integers $\{1,2,3,\ldots\}$.

\begin{defi}\label{defi:graph} A \emph{graph} is a pair $(V,E)$, where $V$ is any set, called the set of vertices, and $E$ is a set of two-element subsets (called \emph{edges}) $\{v,w\}$ of $V$.

A sequence $(v_0, \ldots, v_n)$ of elements of $V$ is called a \emph{path} if $\{v_{i-1}, v_i\}$ is an edge for every $i\in \{1,\ldots, n\}$. The integer $n$ is then called the \emph{length} of the path. We will say that a path $(v_0, \ldots, v_n)$ \emph{connects} $v_0$ and $v_n$. If for every pair of vertices $v\neq w \in V$ there is a path connecting them, we say that the graph $(V,E)$ is \emph{connected}.

When $(V,E)$ is a connected graph, the distance $d(v,w)$ between $v,w\in V$ is the minimal length of a path connecting them. This is a metric called the \emph{path metric}. 
\end{defi}

Note that the set of edges can be reconstructed from the metric by the identity $E = \{ \{v, w\} \st d(v, w) = 1\}$. We will often neglect the distinction between the graph $(V,E)$ and the corresponding metric space $(V, d)$.

Since we are interested in metric spaces, all graphs that we will consider will be connected, although sometimes we will need to prove it. Usually, the \emph{degree} of every vertex $v$ (that is, the number of edges containing this vertex) will be finite, and often in fact bounded uniformly in~$v$.

\begin{defi} A \emph{weighted graph} is a graph $(V,E)$ together with a function $\omega\colon E\to (0, \infty)$, called the \emph{weight function}.

The \emph{weight of a path} $(v_0, \ldots, v_n)$ in a weighted graph $(V,E,\omega)$ is the sum $\sum_{i=1}^n \omega(\{v_{i-1}, v_i\})$.

When $(V,E)$ is connected, \emph{the weighted distance} $d(v,w)$ between $v,w\in V$ is the infimum of the weights of paths connecting them.
\end{defi}

\begin{longRem} The function $d\colon V\times V\to [0,\infty)$ given by weighted distances is a metric in many cases of interest (and a pseudometric in general), for example if one assumes that $\omega(e) \geq 1$ for every $e\in E$ or that the degree of every vertex is finite. When we write about \emph{the metric} on a weighted graph without further comments, we mean this metric (rather than the path metric of the underlying graph).
\end{longRem}

Let $(X,d)$ be a metric space and $A,B \subseteq X$ be nonempty. By slightly abusing notation, we will denote
\[d(A,B) \defeq \inf_{a\in A,\, b\in B} d(a,b).\]
For $x\in X$ and $B\subseteq X$, the notations $d(x,B)$ and $d(B, x)$ are shorthands for $d(\{x\}, B)$.

We will often briefly write that $X$ is a metric space, and then it should be inferred that the metric on $X$ is denoted by $d$. In particular, we will often use the same letter $d$ to denote metrics on different metric spaces. In cases when this could lead to confusion, we will temporarily add the subscript and write $d_X$.

Throughout, a ball $B(x,r)$ centred at $x\in X$ of radius $r > 0$ in a metric space $X$ will mean the \emph{closed} ball, that is, $B(x,r) = \{x' \in X \st d(x,x')\leq r \}$. Occasionally, when the space $X$ is not obvious from the context, we will include it in the notation by writing $B_X(x,r)$. In \cref{stepBallDefinition}, for a number $\delta > 0$ and $r\in \NN$, we will define a set that will similarly be denoted by $B_\delta(x,r)$, but this should not lead to confusion.

\begin{defi}
Given two maps $f,g\colon S\to X$ from any set $S$ to a metric space $(X,d)$, we will say that $f$ and $g$ are \emph{close} if their supremum distance is finite, namely $\sup_{s\in S} d(f(s), g(s)) < \infty$.
\end{defi}

\begin{defi}
Let $C\geq 1$ and $A\geq 0$. A map $f\colon X\to Y$ between metric spaces is called a \emph{$(C,A)$-quasi-isometric embedding} if the following inequality holds for every $x,x'\in X$:
\[ C^{-1} d(x,x') - A \leq d(f(x), f(x')) \leq C d(x,x') + A.\]
Additionally, if for every $y\in Y$, there is $x\in X$ with $d(y, f(x))\leq A$, then $f$ is called a \emph{$(C,A)$-quasi-isometry}\footnote{Note that this additional condition follows if $f(X)$ is $A$-dense in $Y$. We use this condition instead of referring to coarse density because we prefer the strict inequality in the definition of coarse density, and the nonstrict inequality in the definition of quasi-isometry, so that an isometry is a $(1,0)$-quasi-isometry rather that a $(1,\eps)$-quasi-isometry.}.
\end{defi}

When the constants $C$ and $A$ are not important, we will refer more briefly to a quasi-isometric embedding or a quasi-isometry, respectively. If there exists a quasi-isometry $f\colon X\to Y$, we will say that $X$ and $Y$ are quasi-isometric. It is easy to see that the composition of two quasi-isometries is a quasi-isometry (and accordingly for quasi-isometric embeddings), and it is a standard exercise to show that when $f\colon X\to Y$ is a quasi-isometry, then there exists a
quasi-isometry $g\colon Y\to X$.
One can moreover require that $f\circ g$ is close to $\id_Y$ and $g\circ f$ is close to $\id_X$.

\begin{longEx}\label{word-metric} Let $\Gamma$ be a group and let $S$ be a generating set for~$\Gamma$. The vertex set of the Cayley graph $\cay(\Gamma, S)$ is $\Gamma$, and two vertices $\gamma, \gamma'$ form an edge if $\gamma = \gamma' s$ or $\gamma' = \gamma s$ for some $s\in S$. Because $S$ is a generating set, $\cay(\Gamma, S)$ is a connected graph, so we can consider the path metric $d$ on its set of vertices as in \cref{defi:graph}.
This is known as the \emph{(left-invariant) word metric} on $\Gamma$ associated with $S$.

If $S$ is finite and $S'$ is another finite generating set for $\Gamma$, it induces a possibly different word metric~$d'$, but the identity map $(\Gamma, d) \to (\Gamma, d')$ is always a quasi-isometry (in fact, it is a bi-Lipschitz map). Hence, finitely generated groups have metrics that are canonical up to quasi-isometry.
\end{longEx}

We have the following more general definition.

\begin{defi}\label{ce} A map $f\colon X\to Y$ between metric spaces is called a \emph{coarse embedding} if there exist two nondecreasing functions $\rho_-,\rho_+\colon [0,\infty)\to \RR$ such that $\lim_{r\to\infty} \rho_-(r)=\infty$ and the following inequality holds for every $x,x'\in X$:
\[ \rho_-(d(x,x')) \leq d(f(x), f(x')) \leq \rho_+(d(x,x')).\]
Additionally, if the image $f(X)$ is coarsely dense in $Y$, then $f$ is called a \emph{coarse equivalence}.
\end{defi}

In particular, if $f$ is a $(C,A)$-quasi-isometric embedding, then we can put $\rho_-(r) = C^{-1}r - A$ and $\rho_+(r)=Cr+A$ to see that indeed $f$ is a coarse embedding. Similarly as in the special case of quasi-isometries, the composition of coarse embeddings (respectively: coarse equivalences) is a coarse embedding (respectively: a coarse equivalence), and every coarse equivalence $f\colon X\to Y$ admits a map $g\colon Y\to X$ such that $f\circ g$ is close to $\id_Y$ and $g\circ f$ is close to $\id_X$. Such a map $g$ is uniquely defined up to closeness of maps, and it is necessarily a coarse equivalence. Hence, if there is a coarse equivalence $f\colon X\to Y$, it makes sense to call $X$ and $Y$ \emph{coarsely equivalent}.

\begin{longRem}\label{coarselyMonotone}
By definition, if $f\colon X\to Y$ is a coarse embedding, then for $Z=f(X)$ the map $f\colon X\to Z$ is a coarse equivalence. Hence, by combining \cref{subspace,coequiv} we get that coarse entropy is monotone under coarse embeddings.
\end{longRem}

\begin{defi} A metric space $X$ is called \emph{locally finite} if the cardinality of every ball in $X$ is finite.
\end{defi}

\begin{defi}\label{boundedGeometry} A metric space $X$ \emph{has bounded geometry} if for every positive $r$ there is a constant $C(r) \in \NN$ such that the cardinality of the ball $B(x,r)$ is bounded by $C(r)$ for all $x\in X$.
\end{defi}

Note that having bounded geometry in the sense of \cref{boundedGeometry} is not invariant under coarse equivalences or even quasi-isometries: for example, the inclusion $\ZZ \subseteq \RR$ is a quasi-isometry, yet $\ZZ$ has bounded geometry (one can take $C(r)=2r+1$) and $\RR$ does not (every ball is uncountable). Moreover, this property is not invariant under quasi-isometries even if we restrict to graphs: consider for example two trees, $T_1$~consisting only of an infinite trunk (i.e.\ $\NN$ with the usual graph structure), and $T_2$ consisting of the trunk together with $2^k$ leaves (i.e.\ degree-one vertices) attached at $k$, for all integers
$k\in \NN$. Then $T_1$ and $T_2$ are coarsely equivalent, but the cardinalities of balls
of radius $r$ are bounded by $2r+1$ in $T_1$ and unbounded in $T_2$.

However, it is possible (see \cref{ceBoundedGeometry}) to internally characterise the property of being coarsely equivalent to a metric space with bounded geometry. Nonetheless, \cref{boundedGeometry} is commonly used in the literature because it may be more convenient to replace the considered space by a space satisfying \cref{boundedGeometry} than to deal with the original space throughout a proof.

\begin{prop}\label{ceBoundedGeometry} The following conditions are equivalent for a metric space~$X$:
\begin{enumerate}[(i)]
\item\label{ceBoundedGeometry-one} there exists $s > 0$ such that for every $D > 0$ there exists a constant $C'(D)\in \NN$ such that the cardinality of any $s$-separated subset of $X$ of diameter at most $D$ is bounded by $C'(D)$,
\item\label{ceBoundedGeometry-two} $X$ contains a coarsely dense subset with bounded geometry,
\item\label{ceBoundedGeometry-three} $X$ is quasi-isometric to a space with bounded geometry,
\item\label{ceBoundedGeometry-four} $X$ is coarsely equivalent to a space with bounded geometry.
\end{enumerate}
\end{prop}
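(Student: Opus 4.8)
The plan is to prove the cyclic chain (i) $\Rightarrow$ (ii) $\Rightarrow$ (iii) $\Rightarrow$ (iv) $\Rightarrow$ (i). The two middle implications are essentially definitional, so the real work concentrates on (i) $\Rightarrow$ (ii), which manufactures a concrete bounded-geometry model sitting inside $X$, and on (iv) $\Rightarrow$ (i), which must extract the internal ball-counting bound of (i) from an abstract coarse equivalence. I expect the latter to be the \textbf{main obstacle}, since it is the step where the control functions $\rho_-,\rho_+$ of the coarse embedding have to be balanced against the bounded-geometry bound of the model space.

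For (i) $\Rightarrow$ (ii), I would take the scale $s$ supplied by (i) and use Zorn's lemma to choose a maximal $s$-separated subset $Z\subseteq X$; this is routine because a union of a chain of $s$-separated sets is again $s$-separated. Maximality forces $Z$ to be $s$-dense, hence coarsely dense: a point lying at distance at least $s$ from all of $Z$ could be adjoined, contradicting maximality. To see that $Z$ has bounded geometry, I would observe that for $z\in Z$ and $r>0$ the ball $B_Z(z,r)$ is an $s$-separated set (being contained in $Z$) of diameter at most $2r$, so (i) bounds its cardinality by $C'(2r)$; thus $C(r)\defeq C'(2r)$ witnesses bounded geometry of $Z$.

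The implication (ii) $\Rightarrow$ (iii) follows by noting that if $Z\subseteq X$ is $A$-dense with bounded geometry, then the inclusion $Z\hookrightarrow X$ is an isometric embedding whose image is $A$-dense, hence a $(1,A)$-quasi-isometry; so $X$ is quasi-isometric to the bounded-geometry space $Z$. For (iii) $\Rightarrow$ (iv), a quasi-isometry is in particular a coarse equivalence (as recorded after \cref{ce}), so $X$ is coarsely equivalent to a bounded-geometry space.

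Finally, for (iv) $\Rightarrow$ (i), suppose $X$ is coarsely equivalent to a space $Y$ of bounded geometry with bounding function $C_Y$, and let $g\colon X\to Y$ be a coarse embedding with control functions $\rho_-,\rho_+$. Since $\rho_-(r)\to\infty$, I would fix $s$ with $\rho_-(s)>0$. Then for any $s$-separated set $F\subseteq X$ of diameter at most $D$, distinct points of $F$ are at distance at least $s$, so their images are at distance at least $\rho_-(s)>0$; hence $g$ is injective on $F$ and $|g(F)|=|F|$. Moreover $g(F)$ has diameter at most $\rho_+(D)$, so it lies in a single ball of radius $\rho_+(D)$ in $Y$, which contains at most $C_Y(\rho_+(D))$ points. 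Therefore $|F|\le C_Y(\rho_+(D))\eqdef C'(D)$, which is exactly the bound required by (i), closing the cycle.
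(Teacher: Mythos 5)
Your proposal is correct and follows essentially the same route as the paper's proof: a maximal $s$-separated set via Zorn's lemma for (i) $\Rightarrow$ (ii), the definitional middle implications, and for (iv) $\Rightarrow$ (i) the choice of $s$ with $\rho_-(s)>0$ so that the coarse equivalence is injective on $s$-separated sets whose images land in a single ball of radius $\rho_+(D)$. No substantive differences to report.
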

\begin{proof} $\ref{ceBoundedGeometry-one}\Rightarrow \ref{ceBoundedGeometry-two}$. Let $Z$ be a maximal $s$-separated subset of $X$ (such sets exist by the Kuratowski--Zorn lemma). By maximality, $Z$ is $s$-dense in $X$. We put $C(r)=C'(2r)$ for all positive $r$. Now, for any $z\in Z$ and any $r$ the ball $B_Z(z, r)$ has diameter at most $2r$ and is $s$-separated as a subset of $Z$, and hence its cardinality is at most $C'(2r)$ by the assumption: this shows that $Z$ has bounded geometry.

The implications $\ref{ceBoundedGeometry-two}\Rightarrow \ref{ceBoundedGeometry-three}$ and $\ref{ceBoundedGeometry-three}\Rightarrow \ref{ceBoundedGeometry-four}$ are immediate.

$\ref{ceBoundedGeometry-four}\Rightarrow \ref{ceBoundedGeometry-one}$. By the assumption, there is a space $Z$ with bounded geometry and a coarse equivalence $f\colon X\to Z$. Let $C(r)$ be the constants from \cref{boundedGeometry} for~$Z$ and let $\rho_\pm$ be the functions from \cref{ce} for~$f$. Since $\lim_{t\to \infty} \rho_-(t) = \infty$, we can pick $s > 0$ such that $\rho_-(s) > 0$. Define $C'(D) = C(\rho_+(D))$ for every positive~$D$. Now, fix a positive~$D$ and let $Y\subseteq X$ be any $s$-separated subset of diameter at most~$D$. Since $\rho_-(s) > 0$, $f$ is injective on $Y$, and hence the cardinality of $Y$ equals the cardinality of $f(Y)$. However, the diameter of $f(Y)$ is bounded by $\rho_+(D)$, and hence $f(Y)$ is contained in the ball $B(z, \rho_+(D))$ for any $z\in f(Y)$. By the assumption, the cardinality of such balls is at most $C(\rho_+(D))$.
\end{proof}

\begin{defi}
For a metric space $X$ and a basepoint $x\in X$, the \emph{volume growth} is the function
\[\NN \owns r \mapsto |B(x,r)| \in \NN\cup\{\infty\},\]
where $|A|$ denotes the cardinality of a set $A$.
\end{defi}
Clearly, the above function may depend on~$x$ and is sensitive to a change of~$d$, so it is customary to consider 
the equivalence class of this function, where two non-decreasing functions $f,g\colon \NN\to \NN\cup\{\infty\}$ are identified if there exists a constant $D\in \NN$ such that
\[f(r)\leq Dg(Dr) \quad\text{and}\quad g(r)\leq Df(Dr)\]
for all $r\in \NN$. In particular, one says that $X$ has \emph{exponential growth} if its volume growth is equivalent to $r\mapsto \exp(r)$.

\begin{defi}\label{delta-paths}
Recall that a finite sequence $(x_0, \ldots, x_n)$ in a metric space $X$ is called a $\delta$-path for $\delta > 0$ if $d(x_{i-1}, x_i)\leq \delta$ for every $i\in \{1,\ldots, n\}$. We say that such a $\delta$-path \emph{connects} $x_0$ and $x_n$, and we call the number~$n$ the \emph{length} of the $\delta$-path.
\end{defi}

\begin{defi} Let $X$ be a metric space. The binary relation on $X$  consisting of those pairs $(x,x')\in X^2$ for which there is a $\delta$-path connecting $x$ and $x'$ is an equivalence relation, and the equivalence classes of this relation are called \emph{$\delta$-components} of~$X$. For $x\in X$, the $\delta$-component containing $x$ will be denoted by $[x]_\delta$.
\end{defi}

Recall that a metric space $X$ is \emph{geodesic} if for every two points $x,x'\in X$ there is an isometric embedding $p\colon [0,d(x,x')]\to X$ of the interval $[0,d(x,x')]$ such that $p(0)=x$ and $p(d(x,x'))=x'$. Quasigeodesic spaces are defined similarly as follows.

\begin{defi}\label{defi:qg}
A metric space $X$ is \emph{quasigeodesic} if there exist constants $C\geq 1$ and $A\geq 0$ such that for every two points $x,x'\in X$ there exists a $(C,A)$-quasi-isometric embedding $p\colon [0,d(x,x')]\to X$ with $p(0)=x$ and $p(d(x,x'))=x'$. We will briefly say that $p$ \emph{connects} $x$ and $x'$. The map $p$ will also be called a \emph{$(C,A)$-quasigeodesic}.
\end{defi}

In many applications, one is interested only in quasigeodesic spaces. In particular, every connected graph is quasigeodesic, and hence every finitely generated group is quasigeodesic when equipped with the word metric (see \cref{word-metric}).

\begin{prop}\label{qg} The following conditions are equivalent for a metric space $(X,d_X)$:
\begin{enumerate}[(i)]
\item\label{qg1} $X$ is quasigeodesic,
\item\label{qg2} there exists $\delta > 0$ such that for all points $x,x'\in X$ there is a $\delta$-path connecting them of length at most $\ceil{d_X(x,x')}$,
\item\label{qg3} $X$ is quasi-isometric to a connected graph.
\end{enumerate}
Moreover, if $X$ is locally finite, then in \ref{qg3} we can assume the degrees of vertices to be finite. Similarly, if $X$ is coarsely equivalent to a space of bounded geometry, then we can assume the degrees of vertices to be uniformly bounded.
\end{prop}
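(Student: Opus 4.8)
The plan is to establish the cyclic chain $\ref{qg1}\Rightarrow\ref{qg2}\Rightarrow\ref{qg3}\Rightarrow\ref{qg1}$ and then treat the two ``moreover'' refinements at the very end. For $\ref{qg1}\Rightarrow\ref{qg2}$ I would argue by \emph{sampling} a quasigeodesic. Given a $(C,A)$-quasigeodesic $p\colon[0,L]\to X$ with $L=d_X(x,x')$, set $N=\ceil{L}$ and $x_i=p(iL/N)$ for $i=0,\dots,N$. Since $L/N\le 1$, the quasi-isometric estimate gives $d(x_i,x_{i+1})\le C(L/N)+A\le C+A$, so $(x_0,\dots,x_N)$ is a $\delta$-path with $\delta=C+A$, of length $N=\ceil{d_X(x,x')}$, connecting $x$ to $x'$. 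This is the direction in which the sharp length bound of \ref{qg2} comes for free, precisely because the quasigeodesic is parametrised by an interval of length $d_X(x,x')$.

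For $\ref{qg2}\Rightarrow\ref{qg3}$ I would build the \emph{neighbourhood graph} $G$ on vertex set $X$, joining $x\ne x'$ by an edge exactly when $d(x,x')\le\delta$ (we may take $\delta\ge1$, since a $\delta$-path is also a $1$-path when $\delta<1$). Condition \ref{qg2} says any two points are joined by a $\delta$-path, i.e.\ by a path in $G$ after deleting repeated consecutive vertices, so $G$ is connected; let $d_G$ be its path metric. The same deletion shows $d_G(x,x')\le\ceil{d_X(x,x')}\le d_X(x,x')+1$, while the triangle inequality applied along an edge-path gives $d_X(x,x')\le\delta\,d_G(x,x')$. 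Hence the surjective identity map is a quasi-isometry between $(X,d_X)$ and the connected graph $G$.

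The main obstacle is $\ref{qg3}\Rightarrow\ref{qg1}$, which I would split into two facts. First, every connected graph is $(1,1)$-quasigeodesic: along a geodesic edge-path $v_0,\dots,v_N$ one has $d_G(v_i,v_j)=|i-j|$, and setting $p(t)=v_{\floor{t}}$ yields a $(1,1)$-quasi-isometric embedding of $[0,N]$. Second, quasigeodesicity is transported along quasi-isometries. Here, given a quasi-isometry $\phi\colon X\to Y$ with quasi-inverse $\psi$ (close to an inverse with constant $K$) and a quasigeodesic $q\colon[0,M]\to Y$ between $\phi(x)$ and $\phi(x')$, the composite $\psi\circ q$ is a quasi-isometric embedding of $[0,M]$ into $X$ whose endpoints lie within $K$ of $x$ and $x'$. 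I would then reparametrise affinely onto $[0,L]$ with $L=d_X(x,x')$ and redefine the two endpoints to be exactly $x$ and $x'$, which costs at most $2K$ in the additive constant. The delicate point — the step I expect to demand the most bookkeeping — is that the reparametrisation factor $M/L$ is \emph{not} bounded when $L$ is small; this is harmless because in the error terms $M/L$ always occurs multiplied by $|u-v|\le L$, so the offending product $(M/L)|u-v|$ stays bounded by a constant, and the degenerate case $M=0$ forces $L\le CA$, where any map is a quasigeodesic.

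Finally, for the refinements I would use that the degree of a vertex $x$ in the neighbourhood graph $G$ equals $|B(x,\delta)|-1$. If $X$ is locally finite, every such ball is finite, so $G$ already has finite degrees. If $X$ is coarsely equivalent to a space with bounded geometry, then by \cref{ceBoundedGeometry} it contains a coarsely dense subset $Z$ with bounded geometry; as $Z$ is quasi-isometric to $X$ it is again quasigeodesic by the equivalence just proved, so I may instead run $\ref{qg2}\Rightarrow\ref{qg3}$ on $Z$. Its neighbourhood graph has degrees $|B_Z(z,\delta)|-1$, uniformly bounded by the bounded-geometry constant of $Z$ at radius $\delta$, and it is quasi-isometric to $Z$ and hence to $X$.
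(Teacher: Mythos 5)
Your proposal is correct and follows essentially the same route as the paper: sampling the quasigeodesic at steps of size at most $1$ for $\ref{qg1}\Rightarrow\ref{qg2}$, the neighbourhood graph with threshold $\delta$ for $\ref{qg2}\Rightarrow\ref{qg3}$, transporting the graph's $(1,1)$-quasigeodesics and reparametrising affinely (with the small-distance degeneracy handled separately) for $\ref{qg3}\Rightarrow\ref{qg1}$, and rerunning the graph construction on a bounded-geometry space quasi-isometric to $X$ for the ``moreover'' part. The only differences are cosmetic: you pass through a quasi-inverse rather than the quasi-isometry from the graph to $X$, and you absorb the degenerate cases into the additive error rather than splitting them off explicitly as the paper does.
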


The equivalence of \ref{qg1} and \ref{qg3} is well-known among experts, and the characterisation from \ref{qg2} appears in the third author's thesis \cite{thesis}. We provide a proof for self-containment.

\begin{proof}[Proof of \cref{qg}]
$\ref{qg1}\Rightarrow \ref{qg2}$. Let $C,A$ be the constants from \cref{defi:qg} and define $\delta = C+A$. Let $x,x'\in X$ be distinct and let $p\colon [0,d_X(x,x')]\to X$ be a $(C,A)$-quasigeodesic connecting $x$ and $x'$. Let $c=d_X(x,x')/\ceil{d_X(x,x')} \leq 1$. Then the sequence
\[\big(p(0), p(c), \ldots, p(\ceil{d_X(x,x')} \cdot c)\big)\]
is a $\delta$-path connecting $p(0)=x$ and $p(\ceil{d_X(x,x')} \cdot c)=x'$.

\smallskip $\ref{qg2}\Rightarrow \ref{qg3}$. Let a graph $(V,E)$ be defined as follows: $V=X$ and
\[E = \{ \{x,x'\} \st x, x'\in X \text{ and } 0 < d_X(x,x') \leq \delta \}.\]
It follows from the assumption and the definition of $E$ that the graph $(V,E)$ is connected, so let us consider the path metric $d_{(V,E)}$ on $V$. We will show that the identity map $f\colon (X,d_X)\to (V,d_{(V,E)})$ is a quasi-isometry. Since $f$ is surjective, it is obvious that $f(X)$ is coarsely dense in $V$. For any $x,x'\in X$, there is a $\delta$-path $p$ in $X$ of length at most $\ceil{d_X(x,x')}$. However, by the definition of $E$, the sequence $p$ is a path in $(V,E)$, and hence
\begin{equation}\label{qg23-1}
d_{(V,E)}(f(x), f(x')) \leq \ceil{d_X(x,x')} \leq d_X(x,x') + 1.
\end{equation}
In the other direction, by the definition of path metric, for any $x,x'\in X$ there is a path $q = (q_0, \ldots, q_n)$ in $(V,E)$ of length $d_{(V,E)}(f(x), f(x')) \eqdef n$ connecting $q_0=f(x) = x$ and $q_n=f(x') = x'$. Hence we get
\begin{equation}\label{qg23-2}
\fdfrac{1}{\delta}d_X(x,x') \leq \fdfrac{1}{\delta}\textstyle\sum_{i=1}^n d_X(q_{i-1}, q_i) \leq n = d_{(V,E)}(f(x), f(x')),
\end{equation}
so by combining inequalities \labelcref{qg23-1,qg23-2} we conclude that $f$ is a $(\max(1,\delta),1)$-quasi-isometry.

\smallskip $\ref{qg3}\Rightarrow \ref{qg1}$. By the assumption, there exist a graph $(V,E)$, constants $C$ and $A$, and a $(C,A)$-quasi-isometry $f\colon V\to X$. Let $x, x' \in X$ be distinct. Since $f(V)$ is $A$-dense in $X$, there exist vertices $v, v'\in V$ such that $d_X(x, f(v)) \leq A$ and $d_X(x', f(v')) \leq A$.

Note that the map $p_{x,x'}\colon [0,d_X(x,x')] \to X$ given by $p_{x,x'}(r) = x$ for $r<d(x,x')$ and $p_{x,x'}(d(x,x')) = x'$ is a $(1, d(x,x'))$-quasigeodesic. Hence, we will further assume that  $d(x,x')\geq 1$. Similarly, if $v=v'$, then $d(x,x')\leq 2A$, and the map $p_{x,x'}$ is a $(1, 2A)$-quasigeodesic, so we will also assume that $v$ and $v'$ are distinct, that is, $d(v,v')\geq 1$.

Let $(v_0, \ldots, v_{d_{(V,E)}(v,v')})$ be a path of minimal length connecting $v_0=v$ and $v_{d_{(V,E)}(v,v')} = v'$. Note that the map $p\colon [0,d_{(V,E)}(v,v')]\to V$ given by $p(r) = v_{\floor r}$ is a $(1,1)$-quasigeodesic. Consequently, the composition $f\circ p\colon [0,d_{(V,E)}(v,v')]\to X$ is a $(C, C+A)$-quasi-isometric embedding.

Since $d_X(x, f(v)) \leq A$ and $d_X(x', f(v')) \leq A$, we conclude that
\[\frac{d_X(x,x')}{d_{(V,E)}(v,v')} \leq \frac{d_X(f(v), f(v')) + 2A}{d_{(V,E)}(v,v')} \leq \frac{C d_{(V,E)}(v,v')  + 3A}{d_{(V,E)}(v,v') } \leq C + 3A,\]
and similarly
\begin{align*}
\frac{d_{(V,E)}(v,v')}{d_X(x,x')} &\leq \frac{C d_X(f(v),f(v')) + CA}{d_X(x,x')} \\
&\leq \frac{C (d_X(x,x') + 2A) + CA}{d_X(x,x')} \leq 3CA + C.
\end{align*}
In particular, the homothety $h$ mapping $[0,d_X(x,x')]$ onto $[0,d_{(V,E)}(v,v')]$ is a $(D,0)$-quasi-isometry for $D = \max(C+3A, 3CA+C)$. Consequently, the composition $f\circ p\circ h\colon [0, d_X(x,x')]\to X$ is a $(CD, C+A)$-quasi-isometric embedding. Define $q\colon [0, d_X(x,x')]\to X$ by
\[ q(r) = \begin{cases}
x & \text{ if } r=0,\\
f\circ p\circ h(r) & \text{ if } r\in (0, d_X(x,x')),\\
x' & \text{ if } r=d_X(x,x').
\end{cases}\]
Then, $q$ is a desired $(CD, C+3A)$-quasigeodesic connecting $x$ and $x'$.

\smallskip For the ``moreover'' part, note that if $X$ is locally finite, then the graph $(V,E)$ constructed in the proof of the implication $\ref{qg2} \Rightarrow \ref{qg3}$ has finite degrees of vertices. Indeed, this is because the set of neighbours of any vertex $v\in V=X$ is contained in the ball of radius $\delta$ around $v\in X$.

Similarly, if $X$ has bounded geometry, then $(V,E)$ has a uniform bound on the degree of vertices. More generally, by the implication $\ref{ceBoundedGeometry-four}\Rightarrow \ref{ceBoundedGeometry-three}$ in \cref{ceBoundedGeometry}, when $X$ is coarsely equivalent to a space with bounded geometry, there is a space $Y$ with bounded geometry that is quasi-isometric to $X$. It follows from the equivalence of \ref{qg1} and \ref{qg3} that a space quasi-isometric to a quasigeodesic space is itself quasigeodesic, and hence $Y$ is quasi-geodesic. Applying the first sentence of this paragraph to $Y$ gives a graph $(V,E)$ quasi-isometric to $X$ with a uniform bound on the degrees of vertices.
\end{proof}

In the realm of quasigeodesic spaces, every coarse equivalence is a quasi-isometry (see e.g.\ Corollary~1.4.14 in \cite{NY}), although many naturally occurring coarse embeddings are not quasi-isometric. For example, if groups $\Gamma, \Gamma'$ are generated by finite sets $S,S'$ respectively, and $\Gamma'$ is isomorphic to a subgroup $\Lambda \leq \Gamma$, then $\Lambda$ is called \emph{distorted} if the corresponding embedding $\Gamma' \simeq \Lambda \leq \Gamma$ is not quasi-isometric (see for instance \cite{LMR}). Given a finitely generated group~$\Gamma$, one can consider the set of its coarse embeddings into the Hilbert space~$\ell_2$ and quantify how much each of them fails to be a quasi-isometry: the infimal degree of this failure is an invariant of~$\Gamma$ known as the \emph{Hilbert-space compression} \cite{Guentner--Kaminker}.

In fact, even without embedding it into any other space, it is very easy to obtain a space which is not quasigeodesic: for example, the real line $\RR$ with the metric $d'$ given by $d'(x,x') = \sqrt{|x-x'|}$ is not quasigeodesic. Nonetheless, it satisfies the equivalent conditions of the following \cref{ceqg}.

\begin{prop}\label{ceqg} The following conditions are equivalent for a metric space $(X,d_X)$:
\begin{enumerate}[(i)]
\item\label{ceqg1} $X$ is coarsely equivalent to a quasigeodesic space,
\item\label{ceqg2} there exists a nondecreasing function $\rho\colon [0,\infty)\to \NN$ and a constant $\delta > 0$ such that for every $x,x'\in X$ there is a $\delta$-path connecting them of length at most $\rho(d_X(x,x'))$,
\item\label{ceqg3} $X$ is coarsely equivalent to a connected graph.
\end{enumerate}
\end{prop}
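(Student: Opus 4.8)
The plan is to prove the cycle of implications $\ref{ceqg1}\Rightarrow\ref{ceqg2}\Rightarrow\ref{ceqg3}\Rightarrow\ref{ceqg1}$, modelled on the proof of \cref{qg}. The key structural difference is that the linear bound $\ceil{d_X(x,x')}$ on path length in \cref{qg}\ref{qg2} is relaxed to an arbitrary nondecreasing $\rho$ in \ref{ceqg2}; this relaxation is exactly what separates coarse equivalence from quasi-isometry, and it is what allows the identity map in the graph construction below to be merely a coarse equivalence rather than a quasi-isometry.

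For $\ref{ceqg1}\Rightarrow\ref{ceqg2}$, suppose $f\colon X\to Y$ is a coarse equivalence to a quasigeodesic space $Y$, with functions $\rho_-,\rho_+$ as in \cref{ce}. Take a coarse inverse $g\colon Y\to X$, with its own functions $\sigma_-,\sigma_+$, together with a constant $K$ satisfying $d_X(g(f(x)),x)\le K$ for all $x$; such $g$ and $K$ exist by the discussion following \cref{ce}. Applying the implication $\ref{qg1}\Rightarrow\ref{qg2}$ of \cref{qg} to $Y$ yields $\delta_Y>0$ such that any two points of $Y$ are joined by a $\delta_Y$-path of length at most the ceiling of their distance. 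Given $x,x'\in X$, join $f(x)$ and $f(x')$ by such a $\delta_Y$-path $(y_0,\ldots,y_n)$ with $n\le\ceil{d_Y(f(x),f(x'))}\le\ceil{\rho_+(d_X(x,x'))}$, apply $g$, and prepend $x$ and append $x'$. Since consecutive $d_Y$-distances are at most $\delta_Y$, their $g$-images are at most $\sigma_+(\delta_Y)$ apart in $X$, while the two new end steps have length at most $K$; hence $(x,g(y_0),\ldots,g(y_n),x')$ is a $\delta$-path with $\delta=\max(K,\sigma_+(\delta_Y))$ and length $n+2$. Setting $\rho(t)=\ceil{\max(\rho_+(t),0)}+2$ gives the required nondecreasing $\rho\colon[0,\infty)\to\NN$.

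The implication $\ref{ceqg2}\Rightarrow\ref{ceqg3}$ reuses the graph construction from $\ref{qg2}\Rightarrow\ref{qg3}$: let $V=X$ and $E=\{\{x,x'\}\st 0<d_X(x,x')\le\delta\}$, which is connected by \ref{ceqg2}. For the path metric $d_{(V,E)}$, the $\delta$-paths furnished by \ref{ceqg2} are graph paths, so $d_{(V,E)}(x,x')\le\rho(d_X(x,x'))$, while summing the at most $\delta$-long steps of a graph geodesic gives $d_X(x,x')\le\delta\cdot d_{(V,E)}(x,x')$. Thus the identity $(X,d_X)\to(V,d_{(V,E)})$ is a coarse embedding with $\rho_-(t)=t/\delta$ and $\rho_+=\rho$; being surjective, it is a coarse equivalence. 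Finally, $\ref{ceqg3}\Rightarrow\ref{ceqg1}$ is immediate, since every connected graph is quasigeodesic, as recorded after \cref{defi:qg}.

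The main obstacle is the bookkeeping in $\ref{ceqg1}\Rightarrow\ref{ceqg2}$: one must combine the quasigeodesic structure of $Y$ (via \cref{qg}) with both halves of the coarse equivalence, pushing the $\delta_Y$-path forward by the coarse inverse $g$ and correcting the endpoints using the closeness constant $K$, then check that the many constants assemble into a single $\delta$ and a single nondecreasing $\rho$. By contrast, the remaining two implications are the coarse-equivalence analogue of the quasi-isometry argument in \cref{qg}, the only substantive change being that the graph distance is now merely coarsely, rather than linearly, controlled by $d_X$.
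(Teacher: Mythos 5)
The paper omits its own proof of this proposition (deferring to \cite{thesis}), so there is no in-paper argument to compare against; your proof is correct and is the natural coarse analogue of the paper's proof of \cref{qg}. All three implications check out: the coarse inverse $g$ and the closeness constant $K$ exist by the discussion following \cref{ce}, the constants assemble into $\delta=\max(K,\sigma_+(\delta_Y))$ and $\rho(t)=\ceil{\max(\rho_+(t),0)}+2$ as you claim, and the graph construction yields a surjective coarse embedding, hence a coarse equivalence, while \ref{ceqg3}$\Rightarrow$\ref{ceqg1} follows since connected graphs are quasigeodesic. One pedantic remark: a $\delta$-path with a repeated consecutive entry is not literally a path in the graph $(V,E)$, but deleting repetitions only shortens it, so the bound $d_{(V,E)}(x,x')\le\rho(d_X(x,x'))$ is unaffected (the paper's proof of \cref{qg} elides the same point).
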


\Cref{ceqg} is mentioned for comparison with \cref{qg}, but it will not be used later, so we omit the proof. The details can be found in \cite{thesis}, where the characterisation from item~\ref{ceqg2} was introduced and used in the construction of the first counterexamples to the coarse Baum--Connes conjecture not coarsely equivalent to a family of graphs.

\section{Examples}\label{sec:examples}

Before we pass to our main results, we want to present some examples that will show what properties we cannot expect. More specifically, \cref{s:ze,s:is} contain examples of spaces of bounded geometry such that
\begin{itemize}[—]
\item the volume growth is exponential, and the coarse entropy is zero (\cref{boundedComponents,first-example,strangeMetricsExample}),
\item the volume growth is linear, and the coarse entropy is infinite (\cref{second-example}).
\end{itemize}

In the first case, the reason why exponential growth is insufficient to imply infinite coarse entropy is that the spaces considered are far from being geodesic. In the second case, the example is a graph and demonstrates that for spaces that are not sufficiently homogeneous (e.g.\ graphs that are not vertex-transitive\footnote{A graph $(V,E)$ is \emph{vertex-transitive} if for every $v,v'\in V$, there is an automorphism $f$ of $(V,E)$ such that $f(v) = v'$.}), the usual notion of volume growth is not adequate for coarse entropy.

Furthermore, \cref{s:zu} provides an example of
\begin{itemize}[—]
\item a space that does not have bounded geometry, and its coarse entropy is zero (\cref{unboundedZero}).
\end{itemize}
The reason is similar to that in the first case above. Quite interestingly, the example is $1$-connected, but it is not quasi-geodesic.

\subsection{Zero coarse entropy of exponentially growing spaces}\label{s:ze}

\begin{prop}\label{boundedComponents}
Let $X$ be a metric space and assume that for every $\delta > 0$ every $\delta$-component of $X$ is bounded. Then, the coarse entropy of $X$ is zero. 
\end{prop}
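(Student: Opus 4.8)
The plan is to work directly from the separated-set formula~\eqref{spaceEntropySeparated} and to exploit the fact that a $\delta$-path cannot leave the $\delta$-component of its starting point. Concretely, fix a basepoint $x_0\in X$ and $\delta>0$. If $(x_0,x_1,\dots,x_n)\in P(n,\delta,x_0)$ is any $\delta$-path, then for each $i$ the truncation $(x_0,\dots,x_i)$ witnesses that $x_i$ is connected to $x_0$ by a $\delta$-path, so every coordinate of every path in $P(n,\delta,x_0)$ lies in the single $\delta$-component $[x_0]_\delta$.

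By hypothesis $[x_0]_\delta$ is bounded; write $D\defeq\diam([x_0]_\delta)<\infty$, a quantity depending on $\delta$ and $x_0$ but not on $n$. Since the distance between two paths $(x_0,\dots,x_n)$ and $(y_0,\dots,y_n)$ in $P(n,\delta,x_0)$ is $\max_{i\le n} d(x_i,y_i)$, and all the points $x_i,y_i$ belong to $[x_0]_\delta$, this distance is at most $D$. Consequently, whenever $R>D$ no two distinct paths in $P(n,\delta,x_0)$ can be $R$-separated, so every $R$-separated subset is a singleton (and $P(n,\delta,x_0)$ is nonempty, containing for instance the constant path), giving $s(n,R,\delta,x_0)=1$ for all $n$.

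It then remains to unwind the nested limits, which is routine. For $R>D$ we have $\fdfrac1n\log s(n,R,\delta,x_0)=0$ for every $n$, hence $\limsup_{n\to\infty}\fdfrac1n\log s(n,R,\delta,x_0)=0$; as this holds for all sufficiently large $R$, the inner limit $\lim_{R\to\infty}$ is $0$, and finally $\lim_{\delta\to\infty}0=0$, so $h_\infty(X)=0$. I do not expect a genuine obstacle here: the entire content is the observation that $\delta$-paths stay inside one bounded $\delta$-component, which collapses the supply of $R$-separated pseudoorbits to a single point once $R$ exceeds that component's diameter, and the rest is bookkeeping with the limits.
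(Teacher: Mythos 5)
Your proposal is correct and is essentially identical to the paper's own proof: both observe that a $\delta$-path starting at $x_0$ cannot leave the bounded component $[x_0]_\delta$, so once $R$ exceeds its diameter every $R$-separated set of $\delta$-paths is a singleton, forcing all the limits to vanish. The only difference is that you spell out the bookkeeping with the limits more explicitly.
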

\begin{proof}
Fix $x\in X$ and $\delta > 0$, and let $D$ be the diameter of the $\delta$-component $[x]_\delta$ of~$x$. By the definition of $\delta$-components, any $\delta$-path starting at $x$ cannot leave $[x]_\delta$, and hence for all $R>D$ any $R$-separated set of $\delta$-paths starting at $x$ contains at most 1 point. Thus, the coarse entropy of $X$ vanishes.
\end{proof}

A metric $d$ on a metric space $X$ is called an \emph{ultrametric} if it satisfies the following strong form of the triangle inequality:
\[\forall{x,y,z\in X}\; d(x,z) \leq \max(d(x,y), d(y,z)).\]
A metric space whose metric is an ultrametric is called an \emph{ultrametric space}.

\begin{cor}\label{ultrametric} The coarse entropy of ultrametric spaces is zero.
\end{cor}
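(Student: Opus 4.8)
The plan is to reduce the corollary to \cref{boundedComponents}, which already tells us that coarse entropy vanishes whenever all $\delta$-components are bounded. So it suffices to verify that in an ultrametric space, for every $\delta > 0$ each $\delta$-component has diameter at most~$\delta$, hence is bounded.

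The key step is a one-line induction exploiting the strong triangle inequality. Given a $\delta$-path $(x_0, x_1, \ldots, x_n)$, I would show by induction on $k$ that $d(x_0, x_k) \leq \delta$. The base case $k=0$ is trivial, and for the inductive step the ultrametric inequality gives
\[d(x_0, x_{k+1}) \leq \max\big(d(x_0, x_k),\, d(x_k, x_{k+1})\big) \leq \max(\delta, \delta) = \delta,\]
using the inductive hypothesis together with $d(x_k, x_{k+1}) \leq \delta$. Thus any point joined to $x_0$ by a $\delta$-path lies within distance $\delta$ of $x_0$, so $[x_0]_\delta \subseteq B(x_0, \delta)$ and the component has diameter at most $\delta$.

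Since this holds for every basepoint and every $\delta > 0$, all $\delta$-components are bounded, and \cref{boundedComponents} immediately yields $h_\infty(X) = 0$. There is no real obstacle here: the entire content is the observation that, unlike the ordinary triangle inequality where distances accumulate along a path, the ultrametric inequality prevents any such accumulation, collapsing each $\delta$-component into a single ball of radius~$\delta$.
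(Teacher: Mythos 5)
Your proposal is correct and follows exactly the paper's route: reduce to \cref{boundedComponents} by observing that the ultrametric inequality forces every $\delta$-component to have diameter at most $\delta$. The paper states this in one line; you merely make the underlying induction along a $\delta$-path explicit.
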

\begin{proof}
If $X$ is an ultrametric space, then the diameter of any $\delta$-component is bounded by $\delta$.
\end{proof}

\begin{longEx}\label{first-example} Consider the space $X$ of sequences in $\prod_{k\in \NN} \{0, k\}$ with finitely many non-zero entries, and equip $X$ with the supremum metric:
\[d((x_k), (y_k)) = \max_k |x_k - y_k|.\]
\begin{claim}
The metric $d$ on $X$ from \cref{first-example}
is an ultrametric, and hence \cref{ultrametric} applies giving $h_\infty(X) = 0$. However, the volume growth of $X$ is exponential, and $X$ has bounded geometry.
\end{claim}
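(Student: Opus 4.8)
The plan is to establish the three assertions separately, all of which trace back to the special structure of the coordinate value sets $\{0,k\}$.

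First I would prove the ultrametric property coordinate by coordinate. Writing $d_k(x,y)=|x_k-y_k|$ for the contribution of the $k$-th coordinate, the point is that each $d_k$ takes values in the two-element set $\{0,k\}$, and on such a set the strong triangle inequality is automatic: given $x_k,y_k,z_k\in\{0,k\}$, either $x_k=z_k$, so $d_k(x,z)=0$, or $\{x_k,z_k\}=\{0,k\}$, in which case $y_k$ coincides with one of them and hence $\max\big(d_k(x,y),d_k(y,z)\big)=k=d_k(x,z)$. Since $d=\sup_k d_k$ and $\sup_k\max(a_k,b_k)=\max(\sup_k a_k,\sup_k b_k)$, passing to the supremum preserves the inequality, so $d(x,z)\leq\max\big(d(x,y),d(y,z)\big)$. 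Thus $d$ is an ultrametric and \cref{ultrametric} gives $h_\infty(X)=0$.

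The key computation underlying both remaining claims is an explicit description of balls. For any $x\in X$ and any $k$, the difference $|x_k-y_k|$ lies in $\{0,k\}$, so as soon as $k>r$ the constraint $|x_k-y_k|\leq r$ forces $y_k=x_k$; conversely, when $k\leq\floor{r}$ the bound $|x_k-y_k|\leq k\leq r$ holds for both admissible values of $y_k$. Hence $y\in B(x,r)$ if and only if $y$ agrees with $x$ in every coordinate $k>\floor{r}$ and is arbitrary (two choices) in each coordinate $k\leq\floor{r}$; in particular $y$ still has finitely many nonzero entries, so $y\in X$. This yields $|B(x,r)|=2^{\floor{r}}$ for \emph{every} centre $x$.

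From this single count I read off the last two claims. Since $2^{\floor{r}}$ does not depend on $x$, taking $C(r)=2^{\floor{r}}$ shows that $X$ has bounded geometry. Taking $x=0$, the volume growth is $r\mapsto 2^{\floor{r}}$, which is equivalent to $r\mapsto\exp(r)$ in the sense of the equivalence of growth functions, so $X$ has exponential growth. I do not anticipate a serious obstacle here; the only step requiring genuine care is the ultrametric inequality, and the essential idea there is simply that restricting each coordinate to a two-point set turns every coordinate ``distance'' into an ultrapseudometric, whose supremum remains an ultrametric, while the rest is a direct cardinality argument.
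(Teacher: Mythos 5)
Your proof is correct and follows essentially the same route as the paper: the ultrametric inequality comes from the fact that each coordinate contributes a distance lying in the two-point set $\{0,k\}$, and the ball $B(x,r)$ is identified as a product with $2^{\floor{r}}$ elements, which gives both the exponential growth and bounded geometry. The only (harmless) difference is that you carry out the ball count directly for an arbitrary centre $x$, whereas the paper first exhibits an isometry $\iota_x$ carrying $x$ to $0$ and then counts only the ball around the origin.
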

\end{longEx}
\begin{proof}
To show that $d$ is an ultrametric, it suffices to notice that when the last coordinate such that $(x_k)$ differs from $(y_k)$ is $m$, and the last coordinate such that $(y_k)$ differs from $(z_k)$ is $m'$, then $(x_k)$ and $(z_k)$ agree on coordinates larger than $\max(m,m')$.

We will now show that the volume growth of $X$ is exponential, and $X$ has bounded geometry. Let $x\in X$. The map $\iota_x\colon X\to X$ given by $(\iota_x(y))_k=|x_k-y_k|$ maps $x$ to~$0$ and is an isometry because for $y,z\in X$ one has
\[|(\iota_x(y))_k-(\iota_x(z))_k|=||x_k-y_k|-|x_k-z_k||=|y_k-z_k|.\]
This means that $X$ is homogeneous: any point can be mapped by an isometry to 0. Consequently, in order to show that it has bounded geometry it suffices to show that it is locally finite.

Now observe that the $n$-ball around $x = 0$ is the product $\prod_{k\leq n} \{0,k\} \times \prod_{k>n}\{0\}$, which contains $2^n$ elements. This shows that $X$ is locally finite (and hence, by the above, has bounded geometry), and that its volume growth is exponential.
\end{proof}

Apart from ultrametric spaces, the following two important classes of metric spaces satisfy the assumption of \cref{boundedComponents}, i.e.\ that their $\delta$-components are bounded for every $\delta > 0$:
\begin{itemize}[—]
\item\label{asdimzero} metric spaces of asymptotic dimension zero (this is equivalent to the fact that for every fixed $\delta$ the diameters of all $\delta$-components are \emph{uniformly} bounded),
\item\label{coarseunion} spaces $\bigsqcup_n X_n$ obtained as \emph{coarse disjoint unions} of a sequence of bounded spaces $X_n$.
\end{itemize}
Recall that a \emph{coarse disjoint union} of bounded metric spaces $(X_n, d_n)$ is the set-theoretic disjoint union $\bigsqcup_n X_n$  equipped with any metric $d$ (all such metrics will be coarsely equivalent) such that
\begin{equation}\label{coarseDisjointUnion}
\forall n\; d_{|X_n\times X_n} = d_n \text{~~and~~} 
\lim_{n\to \infty} d(X_n, \textstyle\bigcup_{m\neq n} X_m) = \infty.
\end{equation}
One can check that the metric $d$ given by $d(x, y) = d_n(x, y)$ for $x,y\in X_n$ and $d(x,y) = \max(n,m, \diam X_n, \diam X_m)$ for $x\in X_n \neq X_m \owns y$ always satisfies \eqref{coarseDisjointUnion}.

In particular, $\delta$-components are bounded for the following important examples:
\begin{itemize}[—]
\item Countable locally finite groups (that is, countable groups whose finitely generated subgroups are finite) equipped with any proper\footnote{\emph{Properness} of a discrete metric is the same as \emph{local finiteness} (finiteness of the cardinality of any ball). We used the word \emph{proper} to avoid possible confusion resulting from talking about a ``\emph{locally finite} metric on a \emph{locally finite} group''.} translation-invariant metric have asymptotic dimension zero~\cite{Smith}. Note that all such metrics are coarsely equivalent, and the standard way to obtain one is to fix an infinite generating set $S$ and take the ``weighted'' word-length metric with the lengths of generators increasing to infinity.
\item Box spaces, i.e.\ coarse disjoint unions $\bigsqcup_n X_n$, where $X_n = \Gamma / \Gamma_n$ are finite quotients of a finitely generated group $\Gamma$.
\end{itemize}
In fact, in a somehow artificial way locally finite groups are also coarse disjoint unions as one can define $X_0 = \{e\}$ and $X_n = \langle s_1, \ldots, s_n\rangle \setminus \langle s_1, \ldots, s_{n-1}\rangle$, where $S=\{s_1, s_2, \ldots\}$.

\begin{longRem}
The notion of volume growth is a quasi-isometry invariant, while the metric on a countably-generated group is canonical only up to coarse equivalence (recall that quasi-isometry is a strictly finer notion of equivalence of metric spaces than coarse equivalence). For example, $X$ in \cref{first-example} can be equipped with the group structure of the locally finite group $\bigoplus_k \ZZ/2\ZZ$.
The metric from \cref{first-example} is proper and translation invariant, but there are other such metrics on $X$ yielding essentially different volume growth functions.

Indeed, on the one hand, $X$ is coarsely equivalent to a subspace of $\prod_{k\in \NN} \{0, \sqrt{k}\}$, for which the cardinality of the $n$-ball is $2^{n^2}$, so the volume growth is super-exponential. On the other hand, it is also coarsely equivalent to a subspace of $\prod_{k\in \NN} \{0, 2^{2^k}\}$ for which the cardinality of the $n$-ball for $n\geq 2$ is $2^{\floor{\log_2\log_2 n}} \leq \log_2(n)$, which is sublinear.
\end{longRem}

\Cref{boundedComponents} provides a sufficient condition for the coarse entropy to vanish even for exponentially growing spaces, but this condition is by no means a necessary one. In \cref{strangeMetricsExample} below, we provide an example of a space which is 1-connected (in particular, it does not satisfy the assumptions of \cref{boundedComponents}), and its coarse entropy vanishes despite the exponential growth of balls.

\begin{longEx}\label{strangeMetricsExample}
Recall that, as opposed to volume growth, the coarse entropy is a coarse invariant. Hence, if we consider $\ZZ$ with the metric given by $d(m, n) = \log_2(1+ |m-n|)$, then its coarse entropy is the same as for the usual metric, namely it vanishes (by \cite{GM}*{Theorem~4.3}). However, the cardinality of the $n$-ball equals $2^{n+1} - 1$, so the volume growth is exponential.
\end{longEx}

\subsection{Zero coarse entropy of spaces without bounded geometry}\label{s:zu}

\begin{defi}\label{stepBallDefinition} Let $(X,d)$ be a metric space and $\delta > 0$. For $x_0\in X$ and $n\in \NN$ denote
\[B_\delta(x_0, n) \defeq \{x_n \in X \st \exists\, \delta\text{-path } (x_0, x_1, \ldots, x_n) \}.\]
\end{defi}

The notation of \cref{stepBallDefinition} comes from the fact that $B_\delta(x_0, n)$ is the ball of radius $n$ around $x_0$ with respect to the metric (possibly assuming infinite values)
\[d_\delta(x, y) = \min \{ n \st \exists\, \delta\text{-path } (x, x_1 \ldots, x_{n-1}, y) \}\]
(the topology induced by the metric $d_\delta$ is always discrete; we do not claim that $(X,d)$ and $(X,d_\delta)$ are homeomorphic or coarsely equivalent).
We will sometimes refer to $B_\delta(x_0, n)$ as the \emph{$\delta$-ball of radius $n$ around $x_0$}.

\begin{longEx}\label{unboundedZero} Let $X$ be a weighted graph obtained as follows. Start with $\NN_0$ with the usual graph structure. Now, to every prime power $p^k \in \NN$ attach the cycle $G_p^k$ on $pk$ vertices, and, finally, between any two distinct vertices $v, w\in G_p^k$ add an edge with weight $p$.
\end{longEx}

\begin{claim}
The space $X$ from \cref{unboundedZero} is 1-connected and not coarsely equivalent to a space with bounded geometry, and its coarse entropy is zero.
\end{claim}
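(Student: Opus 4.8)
The statement has three parts, which I will treat in increasing order of difficulty. For \emph{$1$-connectedness}, note that all base edges and all cycle edges have weight $1$, so consecutive vertices of the base $\NN$ and consecutive vertices of each cycle $G_p^k$ are at distance exactly $1$ (the weight-$p$ chords never shorten a single edge below $1$, since every edge weight is $\ge 1$); chaining these, every vertex is joined to the base by a $1$-path, and the base is itself a single $1$-component, so $X$ is $1$-connected. For the failure of bounded geometry I would invoke \cref{ceBoundedGeometry} and verify that the first of its equivalent conditions fails, i.e.\ that for every $s>0$ there is a $D$ admitting arbitrarily large $s$-separated subsets of diameter $\le D$. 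Given $s$, fix a prime $p\ge s$ and set $D=p$. Inside $G_p^k$ the distance of two vertices at cyclic distance $j$ equals $\min(j,p)$ (a chord costs $p$, a cyclic detour costs its length, and leaving the cycle costs more), so for $k\ge 2$ the diameter is exactly $p=D$, while selecting every $\ceil{s}$-th vertex around the cycle yields an $s$-separated subset of size $\approx pk/s\to\infty$ as $k\to\infty$. Hence $X$ is not coarsely equivalent to a space of bounded geometry.

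The substance of the claim is that $h_\infty(X)=0$. Working with \eqref{spaceEntropySeparated}, it suffices to show that for \emph{each} fixed $\delta$ one has $\lim_{R\to\infty}\limsup_n \frac1n\log s(n,R,\delta,x_0)=0$, since then the outer limit over $\delta$ vanishes as well (this is of course consistent with the dichotomy of \cref{zeroinf}); concretely I will bound this $\limsup$ by $O\!\left(\delta(\log R)/R\right)$. Three structural features drive the estimate. First, the \emph{base projection} $\phi\colon X\to\NN$ sending each $G_p^k$ to its attachment point $p^k$ is $1$-Lipschitz, because any route between distinct cycles, or between a cycle and the base, must pass through the base. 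Second, distances inside $G_p^k$ are capped at $p$, so a cycle with $p<s$ has diameter $<s$ and is invisible at scale $s$. Third, for $p\ge s\ (\ge\delta)$ the weight-$p$ chords are too long to appear in a $\delta$-path, so motion inside such a ``big'' cycle proceeds along the weight-$1$ edges of the underlying cyclic graph.

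The counting is a coarse bookkeeping at two nested scales. Put $s=R/8$ and cut $\{0,\dots,n\}$ into $m=\ceil{n/h}=O(\delta n/R)$ blocks of length $h=\floor{s/\delta}$, so that a $\delta$-path moves at most $s$ within each block. To each point $x$ I attach a \emph{coarse code}: if $x$ lies in the base or in a cycle with $p<s$, its code is $\floor{\phi(x)/s}$ together with a flag ``type~$0$''; if $x$ lies in a big cycle $G_p^k$, the code records in addition the identity $p^k$ and the coarsened cyclic position $\floor{\mathrm{cycpos}(x)/s}$. One checks that two points sharing a code lie within $3s$ of each other, whence two $\delta$-paths with identical code sequences along the block boundaries stay within $3s+2s=5s<R$ of one another at all times and cannot be $R$-separated. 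Consequently $s(n,R,\delta,x_0)$ is at most the number of admissible code sequences. Since within one block the path moves at most $s$, the code at the next boundary is one of only $O(s)=O(R)$ codes near the current one (the $\floor{\phi/s}$-coordinate changes by $O(1)$, the cyclic coordinate inside a fixed big cycle changes by $O(1)$, and there are at most $O(s)$ big cycles whose attachment lies within $O(s)$). Thus there are at most $O(R)^m=\exp\!\left(O(\tfrac{\delta\log R}{R}\,n)\right)$ code sequences, which yields the desired bound on the $\limsup$ and hence $h_\infty(X)=0$.

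The main obstacle is precisely the failure of bounded geometry proved in the first paragraph: a ``fat'' cycle $G_p^k$ with $p\approx s$ and $k$ huge contains $\approx pk/s$ points that are pairwise $\ge s$ apart yet all lie within a set of diameter $p<R$, so a naive $s$-net has unboundedly many points in a bounded region and a crude itinerary count blows up by a factor growing with $k$, i.e.\ with the location, i.e.\ with $n$. The two scales defeat this: coarsening \emph{time} into $O(\delta n/R)$ blocks means we pay only per block, and within a single block the path cannot traverse a fat cycle, so the coarse cyclic coordinate advances by $O(1)$ regardless of the cycle's size; the fatness survives only through the $O(R)$ choices of cycle identity, contributing a harmless $\log R$ factor that is killed as $R\to\infty$. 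The routine but essential checks will be the $1$-Lipschitz property of $\phi$, the distance formula $\min(j,p)$ inside $G_p^k$, and the per-block transition bound $O(R)$ uniformly in the (unbounded) location.
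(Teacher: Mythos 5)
Your proof is correct and follows essentially the same strategy as the paper's: coarse-grain time into blocks of length $\sim R/\delta$ and space at scale $\sim R$, count itineraries, and exploit the two structural facts that the high-cardinality cycles $G_p^k$ with small $p$ have diameter $p$ and hence collapse to a single coarse code, while for $p$ large the weight-$p$ chords are unusable by a $\delta$-path, so the cyclic coordinate advances only by $O(1)$ per block. The only differences are in implementation — the paper realizes the spatial coarsening via the Voronoi cells of a maximal $(R/4)$-separated net rather than your explicit codes — and the one detail your sketch leaves implicit is that a $\delta$-path may exit and re-enter a big cycle within a single block, which is harmless only because both the exit and the re-entry point must lie within $\delta$ of the attachment vertex (a cut vertex), so the coarsened cyclic coordinate at the block's endpoints is still constrained to $O(1)$ values.
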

\begin{proof}
The fact that $X$ is 1-connected is obvious.

We will now prove that $X$ is not coarsely equivalent to a space with bounded geometry, that is, for every $s > 0$ there is $D > 0$ such that cardinalities of $s$-separated subsets of diameter at most $D$ are unbounded. Indeed, for a fixed $s > 0$ let $p$ be the smallest prime larger than $s$. Let $k\in \NN$ and consider the set $G_p^k$. By picking every $p$th vertex on the cycle $G_p^k$, one obtains an $s$-separated subset (in fact $p$-separated) of diameter $p$ and cardinality $k$. Letting $k$ go to infinity finishes the argument.

We will now prove that the coarse entropy vanishes. Fix $\delta > 1$ and $R > 8 \delta$. Define $r=R/4$ and $q=\floor{r/\delta} \geq 2$. Without loss of generality we can assume that $n$ is a multiple of $q$, namely that $n=mq$ for some $m\in \NN$. Pick a maximal $r$-separated subset $A$ of $X$ and any associated Voronoi partition $\{P_a : a\in A \}$ of $X$, i.e.\ a partition satisfying the condition that $x\in P_a$ implies that $d(x,a) = \min_{a'\in A} d(x,a')$. Consider the set $P(\delta, n, 0)$ (where $0\in \NN_0\subseteq X$) and define a map $e\colon P(\delta, n, 0) \to A^{m+1}$ by requiring that
\[e\left((x_i)_{i=0}^n\right) = (a_j)_{j=0}^m\text{, where $x_{jq}\in P_{a_j}$ for all $0\leq j\leq m$}.\]

First, note that $e$ is injective on $R$-separated subsets of $P(\delta, n, 0)$. Indeed, let us assume that $t, t' \in P(\delta, n, 0)$ and $e(t) = e(t')$, and we will show that the distance between $t$ and $t'$ is less than $R$. The assumption implies that for every $0\leq j\leq m$ we have $d(t_{jq}, t'_{jq})\leq 2r$. Since $t, t'$ are $\delta$-paths, it follows that for every $j\geq 0$ and every $0\leq i \leq q - 1$ such that $jq+i \leq n$ we have 
\[d(t_{jq+i}, t'_{jq+i}) \leq 2r + 2(q-1)\delta < 4r = R,\]
so indeed $e$ is injective on $R$-separated subsets.

Hence, in order to obtain an upper bound on the cardinality of $R$-separated subsets of $P(\delta, n, 0)$, it suffices to bound the cardinality of the image of~$e$. If $(x_i)$ is a $\delta$-path, $e((x_i)) = (a_j)_{j=0}^m$, and $0\leq j \leq m - 1$, then $a_{j+1}$ lies inside the $(\delta+r)$-ball $B_{\delta+r}(a_j, q)$ of radius $q$ (because $d(x_{jq}, a_j)\leq r$ and $d(x_{(j+1)q}, a_{j+1})\leq r$). For any $x\in X$, the $(\delta+r)$-ball $B_{\delta+r}(x, q)$ is contained in the union of
\begin{enumerate}[(a)]
\item\label{firstPart} an interval $I \subseteq \NN \subseteq X$ of at most $2q(\delta+r)+1$ points,
\item\label{secondPart} the sets $G_p^k$ for $p,k$ such that $p^k\in I$ and $p\leq \delta$, and
\item\label{thirdPart} intervals of at most $2q(\delta+r)+1$ points on every cycle $G_p^k$ such that $p^k \in I$ or $x\in G_p^k$.
\end{enumerate}
The cardinalities of the inverval $I$ in item~\labelcref{firstPart} and of intervals in item~\labelcref{thirdPart} are at most $Q\defeq 2q(\delta+r)+1$, and the number of intervals in item~\labelcref{thirdPart} is at most $Q$, so the cardinality of the union of all these intervals is at most $Q+Q^2$. However, the cardinalities of the sets $G_p^k$ occurring in item~\labelcref{secondPart} grow to infinity with $k$ (but we also know that there are at most $Q$ of them).

However, every set $G_p^k$ has diameter $p$, so the intersection of the $r$-separated set $A$ with every $G_p^k$ for $p\leq \delta$ (this is the crucial inequality in item~\labelcref{secondPart} above) consists of at most one point as $r\geq \delta \geq p$. Consequently, the intersection of any $(\delta+r)$-ball $B_{\delta+r}(x, q)$ with $A$ contains at most $2Q + Q^2$ points. This means that there are no more than $2Q + Q^2$ possible values of $a_{j+1}$ for a fixed $a_j$. Therefore, the cardinality of the image of $e$ is at most $(2Q+Q^2)^m$.

We conclude
\begin{align*}
h_\infty(X) &\leq \lim_{\delta\to \infty} \lim_{R \to \infty} \limsup_{m\to \infty}\frac{m \log (2Q+Q^2)}{mq}
\displaybreak[0]\\
&=
\lim_{\delta\to \infty} \lim_{R \to \infty} \frac{\log (2(2q(\delta+r)+1)+(2q(\delta+r)+1)^2)}{q}
\displaybreak[0]\\
&\leq
\lim_{\delta\to \infty} \lim_{R \to \infty} \frac{
\log (2(4R^2+1)+(4R^2+1)^2)
}{\floor{R/4\delta}} = 0
\end{align*}
where the last inequality uses the fact that $R$ is larger than each of $q$, $\delta$, and~$r$.
\end{proof}

\subsection{Infinite coarse entropy of slowly growing spaces}\label{s:is}

For a $\delta$-path $u = (z_0, \ldots z_n)$, let us denote by $u^{-1}$ the same $\delta$-path traversed backwards: $u^{-1} = (z_n, \ldots z_0)$. 

\begin{longEx}\label{second-example}
Let $(x_n)$ be a quickly growing sequence in $\NN$. Let $X$ consist of $\NN$ with the usual graph structure together with the full binary tree $T_n$ of height $n$ attached with the root at every $x_n$. It is clear that $X$ has bounded geometry. By an appropriate choice of $(x_n)$, we can require the volume growth to be linear, for example that  $\lim_{R\to\infty}\frac{ | B(0, R) | }{R} = 1$.
\end{longEx}
\begin{claim} Let $X$ be the space from \cref{second-example}. Then $h_\infty(X) = \infty$.
\end{claim}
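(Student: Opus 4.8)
The plan is to exhibit, for a fixed small scale and every $R$, exponentially many pairwise $R$-separated $\delta$-paths starting at the origin, thereby showing $h_\infty(X) > 0$; by the dichotomy of \cref{zeroinf} this immediately upgrades to $h_\infty(X) = \infty$. I would work with the identity map and formula \eqref{spaceEntropySeparated}, fix the scale $\delta = 1$, and take $x_0 = 0$. Since $s(n, R, \delta, x_0)$ is nondecreasing in $\delta$, a positive lower bound already at $\delta = 1$ suffices after passing to the outer limit $\delta \to \infty$. To avoid a clash of letters I write $N$ for the length of the paths (the index called $n$ in \eqref{spaceEntropySeparated}) and reserve $n$ for the height of a tree $T_n$.

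The naive idea---walk out to a single tree $T_n$ and use its $\approx 2^n$ leaves as separated endpoints---fails, and understanding why is the crux. The leaves lie at distance $x_n + n$ from $0$, and for linear volume growth the gaps $x_n$ must grow at least like $2^n$; thus the exponential count of endpoints is divided by an even larger path length, and the resulting rate tends to $0$. The fix exploits that $R$-separation of paths is a \emph{coordinatewise} condition: two paths count as separated as soon as they are $\geq R$ apart at one single time. This lets me reuse one tree many times through repeated down-and-up excursions, so that each excursion contributes a fresh multiplicative factor while the fixed travel cost $x_n$ to reach the tree is amortised away.

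Concretely, fix $R$ and pick $n > \ceil{R/2}$. Inside $T_n$ (rooted at $x_n$) I would select one leaf from each of the $2^{n-\ceil{R/2}}$ subtrees rooted at depth $n - \ceil{R/2}$; any two of these $M \defeq 2^{n-\ceil{R/2}}$ \emph{markers} have their lowest common ancestor strictly above that depth, hence are at distance exceeding $R$. Then I build paths of length $N = x_n + 2nk$: first run along $\NN$ from $0$ to $x_n$, then perform $k$ consecutive excursions, each of length $2n$, descending from the root to a chosen marker in $n$ steps and climbing back in $n$ steps. Each excursion independently selects one of the $M$ markers, giving $M^k$ paths, all genuine $1$-paths since every edge has length $1$. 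If two of these paths choose different markers in some excursion, then at the \emph{bottom time} of that excursion---a common coordinate, as all excursions have equal length---they occupy two distinct markers and are therefore more than $R$ apart; hence the family is $R$-separated and $s(N, R, 1, 0) \geq M^k$.

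Taking limits is then routine. For fixed $R$ and fixed $n > \ceil{R/2}$, along $N = x_n + 2nk$ one has
\[
\frac{\log s(N,R,1,0)}{N} \;\geq\; \frac{k\,(n-\ceil{R/2})\log 2}{x_n + 2nk},
\]
which converges as $k\to\infty$ to $\tfrac{(n-\ceil{R/2})\log 2}{2n}$; so $\limsup_N \tfrac1N \log s(N,R,1,0)$ is at least this value, and letting $n\to\infty$ bounds it below by $\tfrac12\log 2$, uniformly in $R$. Hence $\lim_{R\to\infty}\limsup_N \tfrac1N\log s(N,R,1,0) \geq \tfrac12\log 2 > 0$, so $h_\infty(X)>0$ and finally $h_\infty(X)=\infty$ by \cref{zeroinf}. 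The only genuine obstacle is the conceptual one flagged above: recognising that repeated excursions, rather than single-shot endpoint separation, are what make the exponential rate survive division by the necessarily large distance $x_n$ to the branching locus---which is also exactly why the placement of the trees is irrelevant and linear volume growth is no impediment.
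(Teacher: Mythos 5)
Your proof is correct and follows essentially the same route as the paper: both amortise the travel cost to a single tree by concatenating repeated down-and-back excursions to pairwise $R$-separated vertices, using coordinatewise separation at the excursion bottoms. The only cosmetic difference is that the paper lets $\delta\to\infty$ to obtain infinity directly from the excursion rate, whereas you fix $\delta=1$, extract a positive lower bound, and invoke the zero--infinity dichotomy of \cref{zeroinf}.
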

\begin{proof}
Fix $\delta,R \in \NN$. Fix a $\delta$-path $o$ of length $\ceil{x_{2R} / \delta}$ connecting the origin to $x_{2R}$. Let $M_{2R}$ denote the set of vertices at level $R$ of the tree attached at $x_{2R}$. Clearly, the cardinality of $M_{2R}$ is $2^R$. For every $y \in M_{2R}$ pick a leaf $z\in T_n$ that is a descendant of~$y$ and choose a $\delta$-path $o_y$ of length $\ceil{2R / \delta}$ connecting $x_{2R}$ with $z$. Observe that the endpoints of $\delta$-paths $o_y, o_{y'}$ for $y\neq y' \in M_{2R}$ are $(2R+2)$-separated, in particular $R$-separated.

Let $p\in \NN$. By concatenating $o * (o_{y_1} * o_{y_1}^{-1}) * (o_{y_2} * o_{y_2}^{-1}) * \ldots * (o_{y_p} * o_{y_p}^{-1})$ for $y_i\in M_{2R}$ we obtain $(2^R)^p$ $R$-separated $\delta$-paths of length $\ceil{x_{2R} / \delta} + 2p\ceil{2R / \delta}$. Thus we get
\begin{align*}
h_\infty(X) &\geq \lim_{\delta\to \infty} \lim_{R\to\infty}  \lim_{p\to\infty}
\frac{\log(2^{Rp})}{\ceil{x_{2R} / \delta} + 2p\ceil{2R / \delta}} \\
&= \lim_{\delta\to\infty} \lim_{R\to\infty}  \lim_{p\to\infty} \frac{Rp \log 2}{2p\ceil{2R/\delta}} \\
&=  \lim_{\delta\to\infty} \lim_{R\to\infty} \frac{R\log 2}{2\ceil{2R/\delta}} \\
&=  \lim_{\delta\to\infty} \frac{\delta \log 2}{4} = \infty \qedhere
\end{align*}
\end{proof}

\begin{longRem}
Recall that the space from \cref{unboundedZero} is not coarsely equivalent to a space with bounded geometry and note that its volume growth is at least linear because it contains $\NN$. That is, from the perspective of having bounded geometry and from the perspective of volume growth, the space from \cref{unboundedZero} should be considered at least as large as the space from \cref{second-example}. However, the coarse entropy vanishes for the space from \cref{unboundedZero} and is infinite for the space from \cref{second-example}, so from the perspective of coarse entropy it is the latter that is larger. In particular, we can easily conclude that there is no coarse embedding of the latter in the former (see \cref{coarselyMonotone}).
\end{longRem}

Similarly, one can reprove many coarse non-embeddability results using coarse entropy. For example, the hyperbolic $n$-spaces $\HH^n$ (see \cref{hyperbolic}) for $n\geq 2$ do not admit a coarse embedding into the space from \cref{strangeMetricsExample} despite the fact that all these spaces have exponential volume growth.

\section{Theorems}\label{sec:theorems}

\subsection{Bounded geometry}\label{sec:boundedGeometry}

Now we investigate the connections between the coarse entropy of a
space and the ``sizes'' of balls in this space.

\begin{defi}\label{vol}
If $(X,d,\mu)$ is a metric measure space (i.e.\ a metric space $(X,d)$ with a Borel measure~$\mu$) and $x\in X$, define $\vol_\delta^x\colon \NN \to [0,\infty]$ by 
\[\vol_\delta^x(l) \defeq \sup_{x_0\in [x]_\delta} \mu\left(B_\delta(x_0,l)\right).\]
In particular, if $\mu$ is the counting measure on $X$, we will denote $\vol_\delta^x$ by $V_\delta^x$.
The superscript in $V_\delta^x$ will often be omitted when $x$ is fixed.
\end{defi}

When we use the notation $\mu(B_\delta(x_0,l))$, we assume that
$B_\delta(x_0,l)$ is a Borel set. While this is the case in all
natural situations, we do not know whether this is true in general.
Indeed, for a general metric space $(X,d)$, even if a set $A\subseteq X$
is Borel, the set $\bigcup_{x\in A}B(x,\delta)$ may be not Borel (private communication 
from L.~Snoha). However, this is not a problem
for us, since the closure of $B_\delta(x_0,l)$ is contained in
$B_\delta(x_0,l+1)$, and the formulas we use involve the limit as $l$
goes to infinity (so the measure of $B_\delta(x_0,l)$ can be replaced
by the measure of its closure if necessary).

The notation of \cref{vol} comes from the fact that
$\vol_\delta^x$ resembles the usual volume growth function $r\mapsto \mu(B(x_0,r))$.

Note that by the triangle inequality $B_\delta(x_0, n) \subseteq B(x_0, n\delta)$, so $V_\delta^x$ takes only finite values when $X$ has bounded geometry. Note further that for $\delta \in \NN$ the above inclusion is an equality for example when $X$ is a graph, which enables a simpler formulation of \cref{main} in this case, namely \cref{corollary:graph}. 

\begin{thm}\label{main-volume} Let $(X,d,\mu)$ be a metric measure space. Fix a basepoint $x\in X$. If $\vol_\delta^x(l)$ is finite for every $\delta > 0$ and $l \in \NN$, and there exists $\delta_0 > 0$ such that
\[\limsup_{l\to \infty} \fdfrac{1}{l} \log \vol_{\delta_0}^x(l) > 0,\]
then the coarse entropy is infinite.
\end{thm}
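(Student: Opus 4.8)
The plan is to use the assumption to produce, at fixed scale $\delta_0$, exponentially many $R$-separated $\delta_0$-paths of length $n$, and thereby force $h_\infty(X)=\infty$. The hypothesis $\limsup_{l\to\infty}\frac1l\log\vol_{\delta_0}^x(l)>0$ says that, for a suitable base point $x_0$ in the $\delta_0$-component of $x$, the measures $\mu(B_{\delta_0}(x_0,l))$ grow at least like $e^{cl}$ along a subsequence of values of $l$, for some $c>0$. The first step is to convert this \emph{measure} lower bound into a \emph{cardinality} lower bound on a well-separated set of endpoints. Fix any target separation scale $R>0$. Since $\vol_{\delta_0}^x(l)$ is finite for every $l$, each ball $B_{\delta_0}(x_0,l)$ is a set of finite measure; covering it by $R$-balls and using that a maximal $R$-separated subset is $R$-dense, one sees that the number of points in a maximal $R$-separated subset of $B_{\delta_0}(x_0,l)$ is at least $\mu(B_{\delta_0}(x_0,l))$ divided by the maximal measure of an $R$-ball. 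Here is the subtle point I would need to address: $R$-balls could in principle have infinite or wildly varying measure, so instead I would work with the \emph{intrinsic} $\delta_0$-ball structure, replacing $R$ by a number of $\delta_0$-steps $q=\lceil R/\delta_0\rceil$ and partitioning $B_{\delta_0}(x_0,l)$ into Voronoi-type cells around a maximal $R$-separated set, exactly as in the proof of \cref{unboundedZero}; each cell sits inside a $\delta_0$-ball of radius $q$ around its centre, and its measure is at most $\vol_{\delta_0}^{x}(q)$, which is finite by hypothesis.

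The second step is to extract the paths. Each point $y$ in the maximal $R$-separated subset $A_l\subseteq B_{\delta_0}(x_0,l)$ is, by definition of $B_{\delta_0}$, the endpoint of some $\delta_0$-path $(x_0,\dots,y)$; among all such paths choose one of length at most $l$. These endpoints are pairwise $R$-separated, so by the definition of distance between pseudoorbits the corresponding paths are themselves an $R$-separated subset of $P(l,\delta_0,x_0)$ (they may have differing lengths, so I would pad each to exactly length $l$ by stalling at the endpoint, which keeps them $\delta_0$-paths and preserves $R$-separation since endpoints are unchanged). Combining with the cell count gives
\[
s(l,R,\delta_0,x_0)\;\ge\;|A_l|\;\ge\;\frac{\mu\!\left(B_{\delta_0}(x_0,l)\right)}{\vol_{\delta_0}^{x}(q)}.
\]
Taking $\frac1l\log$ and using that the denominator is a constant independent of $l$, the $\limsup$ over the distinguished subsequence is at least $c>0$.

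The third step handles the order of limits in \eqref{spaceEntropySeparated}. The estimate above holds for \emph{every} fixed $R$, with a constant $c>0$ that does not depend on $R$ (it depends only on $\delta_0$ and the growth rate in the hypothesis). Hence $\limsup_{l\to\infty}\frac1l\log s(l,R,\delta_0,x_0)\ge c$ for all $R$, so $\lim_{R\to\infty}\limsup_l\frac1l\log s(l,R,\delta_0,x_0)\ge c>0$. Since this holds at the single scale $\delta_0$ and the outer limit $\lim_{\delta\to\infty}$ is monotone in $\delta$ (increasing $\delta$ only enlarges the pseudoorbit sets), we obtain $h_\infty(X)\ge c>0$, and by the dichotomy of \cref{zeroinf} this forces $h_\infty(X)=\infty$. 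The main obstacle I anticipate is precisely the conversion in the first step: ensuring the measure-to-cardinality passage is uniform in $R$, which is why I route it through the finite quantity $\vol_{\delta_0}^x(q)$ and a Voronoi partition at the $\delta_0$-step scale rather than through raw $R$-ball measures in the ambient metric.
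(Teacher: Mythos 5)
There is a genuine gap at the very first step: you assume that the hypothesis provides ``a suitable base point $x_0$ in the $\delta_0$-component of $x$'' for which $\mu(B_{\delta_0}(x_0,l))\ge e^{cl}$ along a subsequence of $l$'s. But $\vol_{\delta_0}^x(l)$ is a supremum over basepoints, so the hypothesis only gives, for each $l$, \emph{some} point $x_l\in[x]_{\delta_0}$ (depending on $l$) with $\mu(B_{\delta_0}(x_l,l))$ large; no single basepoint need work. This is not a technicality: \cref{second-example} (trees $T_n$ attached at sparse points $x_n\in\NN$) is exactly a space where $\sup_{x_0}|B_{\delta_0}(x_0,l)|$ grows exponentially while $|B_{\delta_0}(x_0,l)|$ grows linearly for every \emph{fixed} $x_0$. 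Since the separated paths in $s(n,R,\delta,x_0)$ must all start at one fixed point, you must prepend a connecting $\delta$-path from $x$ to $x_l$, whose length $L_l$ is completely uncontrolled and can swamp $l$ in the $\frac1n\log(\cdot)$ normalisation. The paper's proof is structured precisely to handle this: it fixes one good basepoint $x_R$ per value of $R$, reaches it once by a path of length $L_R$, and then concatenates $p$ out-and-back excursions $o_{a}*o_{a}^{-1}$ to a large $R$-separated set $A$, letting $p\to\infty$ so that $L_R$ becomes negligible. Your proposal omits this mechanism, so as written it only proves a weaker statement with $\vol_{\delta_0}^x(l)$ replaced by $\mu(B_{\delta_0}(x,l))$ for the fixed basepoint.

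A secondary flaw: in the measure-to-cardinality step you bound each Voronoi cell of a maximal $R$-separated set by $\vol_{\delta_0}^x(q)$ with $q=\ceil{R/\delta_0}$, on the grounds that each cell ``sits inside a $\delta_0$-ball of radius $q$ around its centre.'' That containment is false in general: a point within metric distance $R$ of $a$ need not be reachable from $a$ by any $\delta_0$-path of $q$ steps. The fix is the one the paper uses: each cell lies in the metric ball $B(a,R)=B_R(a,1)$, whose measure is at most $\vol_R^x(1)$, which is finite by the hypothesis that $\vol_\delta^x(l)<\infty$ for \emph{all} $\delta$ and $l$ (so your stated worry about $R$-ball measures is already answered by the assumptions). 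On the positive side, your endgame --- establishing $h_\infty(X)\ge c>0$ at the single scale $\delta_0$ and invoking the dichotomy of \cref{zeroinf} --- is a legitimate and somewhat slicker alternative to the paper's explicit $\delta\to\infty$ argument via the compression $B_{\delta_0}(x_k,r)\subseteq B_\delta(x_k,\ceil{r/m})$; but it does not rescue the basepoint issue above.
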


\begin{proof}
By the assumption, there is $\delta_0 > 0$ such that $\limsup_{l\to\infty} \frac{1}{l} \log V_{\delta_0}(l)$ equals $\log C_0$ for some $C_0 \in (1,\infty]$ (where $\log(\infty) = \infty$). Fix $C_1, C > 1$ such that  $C < C_1 < C_0$. By the assumption, there are sequences $(x_k)$ in $[x]_{\delta_0}$ and $(l_k)$ in $\NN$ such that $\mu\left(B_{\delta_0}(x_k, l_k)\right) > C_1^{l_k}$. Since $\vol_\delta^x(l)$ is always finite, for every $\delta \geq \delta_0$ the measure of balls $B(y, \delta)$ of radius~$\delta$ is bounded uniformly in $y\in [x]_{\delta_0}$, namely by $\vol_\delta^x(1)$. Since $C < C_1$, by taking a subsequence we can assume that 
\[\mu \left( B_{\delta_0}(x_k, l_k)\right) / \vol_k^x(1) \geq C^{l_k}.\]

Fix $\delta \geq \delta_0$, and denote $m \defeq \floor{\delta/\delta_0}$. Observe that for any $r\in \NN$
\begin{equation}\label{fasterHops}
B_{\delta_0}(x_k, r) \subseteq B_{\delta}\left(x_k,\ceil{r/m}\right).
\end{equation}

Now fix $R\in \NN$ with $R\geq \delta_0$. Consider the set $B_{\delta_0}(x_R, l_R)$ and pick a maximal $R$-separated subset $A$ of it. By maximality, $\bigcup_{a\in A} B(a, R)$ contains $B_{\delta_0}(x_R, l_R)$. Hence
\[|A| \cdot \vol_R^x(1) \geq \mu\left(\textstyle\bigcup_{a\in A} B(a, R)\right) \geq \mu\left( B_{\delta_0}(x_R, l_R) \right) \geq C^{l_R} \cdot \vol_R^x(1),\]
and thus the cardinality of $A$ is at least $C^{l_R}$.

By \eqref{fasterHops} and the definition of $B_{\delta}(x_R,\ceil{l_R/m})$, for every $a \in A$ there is a $\delta$-path~$o_a$ of length $\ceil{l_R/m}$ connecting $x_R$ and $a$. Since $A$ is $R$-separated, so is the set $\mathcal{O} = \{ o_a : a\in A \}$ of these paths, and clearly its cardinality equals the cardinality of~$A$.

By the definition of $[x]_{\delta_0}$ and the fact that $\delta \geq \delta_0$, there is a $\delta$-path $o$ connecting~$x$ and~$x_R$ of some finite length $L_R$. Now let $p\in \NN$ be arbitrary and consider the following set of $\delta$-paths starting at $x$:
\[\mathcal{O}^{(p)} = \big\{o * o_{a_1} * o_{a_1}^{-1} * \ldots * o_{a_p} * o_{a_p}^{-1} : \forall i \in\{1,\ldots, p\} \; a_i \in A \big\}.\]
By construction, the cardinality of $\mathcal{O}^{(p)}$ equals $|A|^p \geq C^{pl_R}$, and it consists of $\delta$-paths of length $n_p \defeq L_R + 2p \cdot \ceil{l_R/m}$. We have
\begin{align*}
\limsup_{p\to\infty} \fdfrac{1}{n_p} \log|\mathcal{O}^{(p)}| &
\geq
\lim_{p\to\infty} \frac{pl_R \log C}{L_R + 2p \ceil{l_R/m}}
= \frac{l_R \log C}{2 \ceil{l_R/m}}.
\end{align*}
Since in \eqref{spaceEntropySeparated} we are interested in $\limsup$ as $n\to \infty$, it suffices to obtain a lower bound on a sequence $(n_p)$ as above. Thus, by recalling that $m=\floor{\delta/\delta_0}$ and unfixing $\delta$ and~$R$, we conclude that
\[h_\infty(X) \geq \lim_{\delta\to \infty} \lim_{R \to \infty} \frac{l_R \log C}{2 \ceil{l_R/m}} = \lim_{\delta\to \infty} \frac{m\log C}{2} = \infty. \qedhere\]
\end{proof}

\begin{longEx}\label{hyperbolic}
 Consider the hyperbolic $n$-space $\HH^n$ for any $n \geq 2$. For the Riemannian metric on $\HH^n$, we have $B_\delta(x_0, l) = B(x_0, \delta l)$ for all $\delta > 0$, $l\in \NN$, and $x_0\in \HH^n$, and the Riemannian volume of $B(x_0, \delta l)$ does not depend on $x_0$ by homogeneity. It is well known that this volume is proportional to $\int_0^{\delta l} \sinh^{n-1}r\,dr$, so in particular $\vol_\delta^x(l)$ is finite for all $\delta > 0$ and $l\in \NN$, and $\lim_{l\to \infty} \frac{1}{l} \log \vol_\delta^x(l) = \log(e^{(n-1)\delta}) > 0$ for all $x\in \HH^n$. Hence, we can apply \cref{main-volume} to conclude that $h_\infty(\HH^n) = \infty$.
\end{longEx}

Out main result in this section is the following \cref{main}. Item~\labelcref{mainSecondAssertion} in \cref{main} is a special case of \cref{main-volume}.

\begin{thm}\label{main} Let $(X,d)$ be a metric space. Assume that $X$ has  bounded geometry. Fix a basepoint $x\in X$ and put $V_\delta = V_\delta^x$. We have the following dichotomy:
\begin{enumerate}[(i)]
\item\label{mainFirstAssertion} if for all $\delta > 0$
\begin{equation*}
\lim_{l\to \infty} \fdfrac{1}{l} \log V_\delta(l) = 0,
\end{equation*}
then the coarse entropy of $X$ is zero,
\item\label{mainSecondAssertion} if there exists $\delta > 0$ such that
\[\limsup_{l\to \infty} \fdfrac{1}{l} \log V_\delta(l) > 0,\]
then the coarse entropy of $X$ is infinite.
\end{enumerate}
\end{thm}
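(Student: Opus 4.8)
The plan is to treat the two items separately, disposing of item~\labelcref{mainSecondAssertion} immediately and concentrating on item~\labelcref{mainFirstAssertion}. For item~\labelcref{mainSecondAssertion}, take $\mu$ to be the counting measure, so that $\vol_\delta^x = V_\delta^x = V_\delta$. Bounded geometry gives $B_\delta(x_0,l)\subseteq B(x_0,l\delta)$ and hence finiteness of $V_\delta(l)$ for every $\delta$ and $l$, while the hypothesis $\limsup_{l}\frac1l\log V_\delta(l)>0$ is precisely the hypothesis of \cref{main-volume}. Thus \cref{main-volume} applies verbatim and yields infinite coarse entropy, so no new work is needed here.

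The substance is item~\labelcref{mainFirstAssertion}, which I would prove by a time-coarsening argument bounding the separated counting function $s(n,R,\delta,x_0)$ in \eqref{spaceEntropySeparated} (taking $x_0=x$). Fix $\delta>0$ and a large $R$, and set $q=\floor{R/(2\delta)}$, so that $2q\delta\le R$. Partition the time axis $\{0,\ldots,n\}$into blocks of length $q$ and encode each $\delta$-path $(x_0,\ldots,x_n)$ by the sequence of its block endpoints $(x_0,x_q,x_{2q},\ldots,x_{\floor{n/q}q})$. The first key step is that this encoding is \emph{injective on any $R$-separated set}: if two $\delta$-paths $(x_i)$ and $(y_i)$ share all block endpoints, then for an index $i=jq+s$ with $x_{jq}=y_{jq}$ the triangle inequality gives
\[ d(x_i,y_i)\le d(x_i,x_{jq})+d(y_{jq},y_i)\le 2s\delta\le 2(q-1)\delta<R, \]
so the two paths are within distance $<R$ at every coordinate and cannot be $R$-separated.

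Given injectivity it remains to count encodings. Since $x_0=x$ is fixed and each subsequent block endpoint satisfies $x_{(j+1)q}\in B_\delta(x_{jq},q)$ with $x_{jq}\in[x]_\delta$, the number of choices for $x_{(j+1)q}$ given $x_{jq}$ is at most $|B_\delta(x_{jq},q)|\le V_\delta(q)$, which is finite by bounded geometry. Hence $s(n,R,\delta,x_0)\le V_\delta(q)^{\floor{n/q}}$, giving
\[ \fdfrac1n\log s(n,R,\delta,x_0)\le \frac{\floor{n/q}\log V_\delta(q)}{n}\le \frac{\log V_\delta(q)}{q}. \]
As this bound is independent of $n$, it also bounds $\limsup_{n\to\infty}$. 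Letting $R\to\infty$ with $\delta$ fixed forces $q\to\infty$, so by the hypothesis of item~\labelcref{mainFirstAssertion},
\[ \lim_{R\to\infty}\limsup_{n\to\infty}\fdfrac1n\log s(n,R,\delta,x_0)\le \lim_{q\to\infty}\frac{\log V_\delta(q)}{q}=\lim_{l\to\infty}\fdfrac1l\log V_\delta(l)=0, \]
and since this holds for every $\delta>0$, the outer limit in \eqref{spaceEntropySeparated} vanishes, whence $h_\infty(X)=0$.

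I do not expect a serious obstacle beyond choosing $q$ so that the within-block drift $2(q-1)\delta$ stays strictly below $R$. The conceptual point to get right is that, in contrast with \cref{unboundedZero}, the step size is kept fixed at $\delta$ throughout rather than enlarged in order to pass to a coarser net: this is exactly what lets the count be controlled by the fixed-$\delta$ quantity $V_\delta(q)$, and hence by the fixed-$\delta$ growth hypothesis. Bounded geometry enters precisely to guarantee $V_\delta(q)<\infty$, so that the \emph{exact} block endpoints can be counted and no Voronoi coarsening is required.
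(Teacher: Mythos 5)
Your proposal is correct and follows essentially the same route as the paper: item~(ii) is reduced to \cref{main-volume} in exactly the same way, and for item~(i) both arguments rest on the same time-block decomposition into blocks of length about $R/\delta$ with the count $V_\delta(\text{block length})^{\text{number of blocks}}$. The only difference is bookkeeping: the paper works with $R$-dense families via \eqref{spaceEntropyDense}, constructing an explicit spanning set $F_n$ of concatenated representative $\delta$-paths, whereas you bound $R$-separated families directly via \eqref{spaceEntropySeparated} by showing the block-endpoint encoding is injective on such families (the same device the paper itself uses in the proof for \cref{unboundedZero}); the two are dual and equally valid here.
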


\begin{proof}
We begin with the proof of item~\labelcref{mainFirstAssertion}, for which 
we will use formula \eqref{spaceEntropyDense}.
Fix $\delta > 0$. Without loss of generality we can assume that $R > 0$ is of the form $R=(k+1)\delta$ for some
$k\in \NN$. Fix $n \in \NN$. For every $m\in\{1,2,\dots,k\}$ and every pair of points $y,y'\in B_\delta(x, n)$ such that there exists a $\delta$-path
from $y$ to $y'$ of length $m$, fix one such $\delta$-path, and denote it by
$p(y,y',m)$. Let $P$ be the set of the paths $p(y,y',m)$ over all such triples $(y,y',m)$. Observe
that if we fix $y$ and $m$, then the number of choices of~$y'$ equals $|B_\delta(y,m)| \leq |B_\delta(y,k)| \leq V_\delta(k)$.

We have $n=rk+s$, where $0\le s<k$ and $r = \floor{n/k}$. Let $F_n$ be the set of all
$\delta$-paths of length $n$ starting at $x$ that are concatenations of $\delta$-paths from $P$ with the following lengths: first, take $r$ $\delta$-paths of length $k$ and then one $\delta$-path of length $s$.

Then the set $F_n$ is $R$-dense in
the set of all $\delta$-paths of length $n$ originating at $x$.
Indeed, if $(x_i)$ is a $\delta$-path originating at $x$, then there exists a $\delta$-path $p=(p_0,\ldots, p_n)$ in $F_n$ such that $p_i = x_i$ for all $i \in \{0, k, \ldots, rk, n\}$. Thus for any other index $j\in \{0, \ldots, n\}$ there exists $i$ as above such that $|j-i| \leq k/2$, and hence $d(x_j, x_i) \leq \delta k / 2$ and $d(p_j, p_i) \leq \delta k / 2$, so indeed $d(x_j, p_j) \leq \delta k < R$.

By construction, the cardinality of $F_n$ is at most $V_\delta(k)^{r+1}$. Therefore, the minimal cardinality $r(n,R,\delta,x)$ of an $R$-dense subset of $\delta$-paths of length $n$ is bounded by 
$V_\delta(k)^{\frac{n}{k}+1} \leq V_\delta(k)^{\frac{2n}{k+1}+1}=V_\delta(R/\delta)^{\frac{2n\delta}{R}+1}$.
Hence,
\[
\limsup_{n\to\infty}\fdfrac 1 n \log |F_n|
\leq
\limsup_{n\to\infty}\frac{\big(\frac{2n}{R/\delta}+1\big)\log V_\delta\left(R/\delta\right)}{n} \displaybreak[0]
=
\frac{2\log V_\delta\left(R /\delta\right)}{ R /\delta}
\]
and by the assumption the last expression goes to zero as $R$ tends to infinity, which  by \eqref{spaceEntropyDense} shows that the coarse entropy of $X$ is zero, and finishes the proof of item~\labelcref{mainFirstAssertion}.

\medskip Item~\labelcref{mainSecondAssertion} is a special case of \cref{main-volume}. To see that, it suffices to observe that for $\mu$ being the counting measure the functions $\vol_\delta^x$ and $V_\delta^x$ coincide, and that under the assumptions of \cref{main} the value of $\vol_\delta^x(l)$ is finite for every $\delta,l \in \NN$ because
\[\vol_\delta^x(l) = \sup_{x_0\in [x]_\delta} \mu(B_\delta(x, l)) \leq \sup_{x_0\in X} \mu(B(x,\delta \cdot l)) < \infty,\]
where the last inequality is the definition of bounded geometry.
\end{proof}

\begin{longRem}\label{boundedGeometryNotReallyNeeded}
Note that the proof of item~\labelcref{mainFirstAssertion} in \cref{main} does not use the assumption that $X$ has bounded geometry. However, it deduces from the vanishing of the limit that $V_\delta(k)$ is finite for every $\delta>0$ and $k\in \NN$, which is equivalent to bounded geometry in many cases of interest, so we preferred to make it an explicit common assumption for both items of \cref{main}.
\end{longRem}

The following provides an example of a space that comes under the purview of \cref{main-volume} but not \cref{main}.

\begin{longEx}\label{measure-example} Let the rooted tree $T$ be defined as follows. The root has 2 children, each child of the root has 3 children, each grand-child of the root has 4 children, and so on. 
\end{longEx}

Clearly, $T$ from \cref{measure-example} does not have bounded geometry itself, and by \cref{ceBoundedGeometry} it is easy to see that it is not coarsely equivalent to a space with bounded geometry. Hence, its coarse entropy cannot be calculated with \cref{main}.\ref{mainFirstAssertion}.

\begin{claim} The tree $T$ from \cref{measure-example} satisfies the assumptions of \cref{main-volume}.
\end{claim}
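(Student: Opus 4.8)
The plan is to equip $T$ with a carefully chosen Borel measure $\mu$ and then verify the two hypotheses of \cref{main-volume} directly. First note that a vertex at depth $k$ (root at depth $0$) has exactly $k+2$ children, so there are $(k+1)!$ vertices at depth $k$; thus $T$ is locally finite but has unbounded vertex degrees. Consequently the counting measure is useless here: going deeper into $T$, balls of any fixed radius can be made arbitrarily large, so $V_\delta^x(l)=\sup_{x_0}|B_\delta(x_0,l)|$ would already be infinite, violating the finiteness hypothesis of \cref{main-volume}. The remedy I would use is to spread the mass so that each depth carries a controlled total, setting $\mu(\{v\})=t^{k}/(k+1)!$ for a vertex $v$ at depth $k$, where $t>1$ is fixed (say $t=2$). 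With this choice the total mass at depth $k$ is $(k+1)!\cdot t^{k}/(k+1)!=t^{k}$.

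The computational heart of the argument is a single telescoping identity. A vertex $u$ at depth $d$ has $(d+j+1)!/(d+1)!$ descendants at depth $d+j$, each of mass $t^{d+j}/(d+j+1)!$, so the total mass of the descendants of $u$ at depth $d+j$ equals $t^{d+j}/(d+1)!$ — the enormous factorial counts cancel, leaving a clean geometric dependence on $j$. For the growth hypothesis I would take $\delta_0=1$ and the basepoint $x$ at the root: since $B(\mathrm{root},l)$ is exactly the set of vertices of depth at most $l$, summing the per-level masses gives $\mu(B(\mathrm{root},l))=\sum_{m=0}^{l}t^{m}\ge t^{l}$, whence $\limsup_{l}\frac1l\log \vol_1^{x}(l)\ge \log t>0$.

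The main obstacle — and the reason the measure has to be tuned so precisely — is the uniform-in-basepoint finiteness of $\vol_\delta^x(l)$, where the supremum ranges over all $x_0\in[x]_\delta$. Here I would use the inclusion $B_\delta(x_0,l)\subseteq B(x_0,\delta l)$ to reduce to bounding $\mu(B(v,L))$ uniformly in $v$, with $L=\floor{\delta l}$. Given $v$ at depth $k$, every point of $B(v,L)$ is a descendant, of depth at most $k+L$, of the ancestor $u$ of $v$ at depth $d_0=\max(0,k-L)$; applying the telescoping identity level by level bounds $\mu(B(v,L))$ by $\frac{t^{d_0}}{(d_0+1)!}\sum_{j=0}^{2L}t^{j}$, since at most $2L$ levels are relevant. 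As $d\mapsto t^{d}/(d+1)!$ is a bounded sequence, this is dominated by a constant depending only on $t$ and $L$ but not on $v$, proving $\vol_\delta^x(l)<\infty$ for every $\delta>0$ and $l\in\NN$. (For $\delta<1$ the $\delta$-component of $x$ is the singleton $\{x\}$, so finiteness is trivial.) Both hypotheses of \cref{main-volume} then hold, and we conclude $h_\infty(T)=\infty$.
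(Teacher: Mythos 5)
Your proposal is correct and follows essentially the same route as the paper: you choose the identical measure ($\mu(\{v\})=2^{k}/(k+1)!$ at depth $k$, i.e.\ $t=2$), your telescoping identity is the paper's observation that the mass doubles from a vertex to its set of children, and your uniform bound on $\mu(B(v,L))$ via the ancestor at depth $\max(0,k-L)$ is a mild repackaging of the paper's decomposition $B(v,l)=\bigcup_{i}T_{p^{i}(v)}(l-i)$. The only cosmetic difference is that your finiteness bound is of order $t^{2L}$ rather than the paper's sharper $2^{L+2}$, which is immaterial for the claim.
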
 
\begin{proof}
Define a measure $\mu$ on $T$ by assigning to every vertex of $T$ at distance~$n$ from the root the measure $\frac{2^n}{(n+1)!}$. To see that the assumptions of \cref{main-volume} are satisfied, first note that the measure of every singleton is bounded by one. When $v$ is at distance~$n$ from the root and $C_v = \{v_1,\ldots, v_{n+2}\}$ is the set of children of $v$, then
\[2 \mu(\{v\}) =  2 \cdot \frac{2^n}{(n+1)!} = (n+2) \cdot \frac{2^{n+1}}{(n+2)!} = \mu(C_v).\]

It follows that for any vertex $v$ the subtree $T_v(l)$ consisting of descendants of $v$ at distance at most $l\in \NN \cup \{0\}$ from $v$ has measure $(2^{l+1}-1)\mu(v)$. In particular, if we take $v$ as the root, we see that
\[\limsup_{l\to \infty} \fdfrac{1}{l} \log \vol_{1}^v(l) \geq \lim_{l\to\infty}\fdfrac{1}{l} \log \mu(B(v,l)) = \lim_{l\to\infty} \fdfrac{1}{l} \log \mu(T_v(l)) = \log 2.\]

For every vertex $v$ that is not the root, denote by $p(v)$ its parent. Then, we see that $B(v,l) = \bigcup_{i=0}^{\min(l,n)} T_{p^i(v)}(l-i)$, where $n$ is the distance of $v$ from the root.
Consequently, we get
\[\mu(B(v,l)) \leq
\sum_{i=0}^{\min(l,n)} (2^{l-i+1}-1) \cdot \frac{2^{n-i}}{(n-i+1)!}
\leq 2^{l+2},
\]
and since for every $\delta > 0$ we have $B_\delta(v,l) = B(v, \floor{\delta} l)$, we conclude that $\vol_\delta^x(l)\leq 2^{\delta l + 2}$, so it is finite for every $\delta> 0$ and $l\in \NN$. This shows that the space $T$  satisfies the assumptions of \cref{main-volume}.
\end{proof}

The space $T$ from \cref{measure-example} is also covered by \cref{graphUnbounded} from the next section. However, \cref{graphUnbounded} requires the space to be quasigeodesic, which is not necessary for \cref{main-volume}.

\Cref{measure-example} suggests that it may be fruitful to generalise \cref{main}.\labelcref{mainFirstAssertion} using $\vol_\delta^x(l)$, analogous to \cref{main-volume} generalising \cref{main}.\labelcref{mainSecondAssertion}. However, such a generalisation
requires an additional assumption on the measure $\mu$: without any such additional assumption, for every separable $X$ we can arrange $\mu(X)$ to be finite, and then the naive analogues of the assumptions of \cref{main}.\labelcref{mainFirstAssertion} are trivially satisfied. In \cref{main-corollary}, we propose the following additional assumption: there exists $r > 0$ such that $\mu(B(x,r))$ is bounded away from zero uniformly in $x\in X$. Although \cref{main-corollary} is formally more general than \cref{main}.\labelcref{mainFirstAssertion}, we derive below \cref{main-corollary} from \cref{main}.\labelcref{mainFirstAssertion}.

\begin{thm}\label{main-corollary} Let $(X,d,\mu)$ be a metric measure space. Assume that there exists $r > 0$ such that $\inf_{x\in X} \mu(B(x,r)) > 0$. Fix a basepoint $x_0\in X$. If for all $\delta > 0$
\begin{equation}
\label{subexp}
\lim_{l\to \infty} \fdfrac{1}{l} \log \vol_{\delta}^{x_0}(l) = 0,
\end{equation}
then the coarse entropy is zero.
\end{thm}
\begin{proof}
Let $Z\subseteq X$ be a maximal $3r$-separated subset containing $x_0$. By \cref{coequiv}, it suffices to prove that the coarse entropy of $Z$ is zero. Let $\delta > 0$ and consider the ball $B_Z(x_0, \delta)$ in $Z$. Note that $\bigcup_{z\in B_Z(x_0, \delta)} B_X(z,r)$ is contained in the ball $B_X(x_0,\delta+r)$, whose measure is bounded by $\vol_{\delta+r}^{x_0}(1)$ (which is finite by \eqref{subexp}). Consequently,
\[|B_Z(x_0, \delta)| \leq \frac{\vol_{\delta+r}^{x_0}(1)}{\inf_{x\in X} \mu(B(x,r))}.\]
In particular, $B_Z(x_0, \delta)$ is finite for every $\delta > 0$, and hence $Z$ is countable. Let $p\colon X\to Z$ be any measurable closest-point retraction. (To construct it, first arrange the elements of $Z$ in a sequence $(z_1, z_2, \ldots)$. By the finiteness of balls, for every $x\in X$ there is a finite set $C_x\subseteq Z$ such that every $z\in C_x$ minimises the distance between~$x$ and $Z$ (in particular, the infimum of distances is acquired). Define $p(x)$ as $z\in C_x$ with the smallest index in the sequence $(z_1, z_2, \ldots)$.)

Consider the pushforward measure $p_*(\mu)$. By $3r$-disjointness of $Z$, the inverse image $p^{-1}(z)$ contains the ball $B(z,r)$ (in fact it contains the open ball of radius $3r/2$). Consequently, $p_*(\mu)(\{z\}) \geq \inf_{x\in X} \mu(B(x,r))$. We will also later use that by the $3r$-density of $Z$ the inverse image $p^{-1}(z)$ is contained in $B(z,3r)$.

For $z\in Z$, $\delta> 0$, and $l\in \NN$, denote by $[z]_{Z,\delta}$ the $\delta$-component of $z$ in $Z$, and by $B_{Z,\delta}(z,l)$ the $\delta$-ball in $Z$ of radius $l$ around $z$. Clearly, these are contained in the corresponding $\delta$-component and $\delta$-ball in $X$. Hence
\begin{align*}
\sup_{z\in [x_0]_{Z,\delta}} |B_{Z,\delta}(z, l)|
&\leq
\frac{\sup_{z\in [x_0]_{Z,\delta}} \big(p_*(\mu)\big)\big(B_{Z,\delta}(z, l)\big)}{\inf_{z\in Z} p_*(\mu)(\{z\})}\displaybreak[0]\\
&\leq
\frac{\sup_{x\in [x_0]_{\delta}} \mu\big(B_{\delta+3r}(x, l)\big)}{\inf_{x\in X} \mu(B(x,r))}\displaybreak[0]\\
&\leq
\frac{\vol^{x_0}_{\delta + 3r}(l)}{\inf_{x\in X} \mu(B(x,r))},
\end{align*}
and thus the assumption on $X$ that $\lim_{l\to \infty} \frac{1}{l} \log \vol_{\delta}^{x_0}(l)$ vanishes for all $\delta$ implies the main and only (in the light of \cref{boundedGeometryNotReallyNeeded}) assumption of item~\labelcref{mainFirstAssertion} in \cref{main} for~$Z$, namely that $\lim_{l\to \infty} \frac{1}{l} \log V_\delta^{x_0}(l)$ vanishes for all~$\delta$. Hence, the coarse entropy of~$Z$ vanishes. 
\end{proof}

Recall from \cref{qg} that any quasigeodesic metric space (in particular any geodesic space, like a Riemannian manifold, for instance the hyperbolic plane) is coarsely equivalent to (even quasi-isometric to) a connected graph. In these cases, \cref{main} boils down to the following.
\begin{thm}\label{corollary:graph} Let $X$ be a quasigeodesic metric space of bounded geometry, for example a connected graph with the degrees of vertices uniformly bounded. We have the following dichotomy:
\begin{enumerate}[(i)]
\item\label{first-graph} if $\lim_{l\to\infty} \frac{1}{l} \log \left(\sup_{x\in X}|B(x,l)|\right) = 0$, then the coarse entropy of $X$ is zero,
\item\label{second-graph} if $\limsup_{l\to\infty} \frac{1}{l} \log \left(\sup_{x\in X}|B(x,l)|\right) > 0$, then the coarse entropy is infinite.
\end{enumerate}
\end{thm}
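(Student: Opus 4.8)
Let $X$ be a quasigeodesic metric space of bounded geometry. Then (i) if $\lim_{l\to\infty}\frac1l\log(\sup_{x}|B(x,l)|)=0$ the coarse entropy of $X$ is zero, and (ii) if $\limsup_{l\to\infty}\frac1l\log(\sup_x|B(x,l)|)>0$ the coarse entropy is infinite.

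The plan is to deduce this directly from \cref{main} by showing that, for a quasigeodesic space of bounded geometry, the two growth functions appearing in the hypotheses are comparable up to a multiplicative rescaling of the radius, so that their exponential growth rates vanish (or fail to vanish) simultaneously. Concretely, I want to compare $V_\delta(l)=\sup_{x_0\in[x]_\delta}|B_\delta(x_0,l)|$ with $W(l)\defeq\sup_{y\in X}|B(y,l)|$.

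The argument has two easy directions and one direction that carries the real content. First I would record the trivial inclusion: by the triangle inequality $B_\delta(x_0,l)\subseteq B(x_0,\delta l)$, so $V_\delta(l)\le W(\delta l)$ for every $\delta$ and $l$. Hence if $\limsup_l\frac1l\log V_\delta(l)>0$ for some $\delta$, then $\limsup_l\frac1l\log W(\delta l)>0$, and rescaling the radius gives $\limsup_l\frac1l\log W(l)>0$; conversely, and more usefully here, if the metric-ball growth rate $\limsup_l\frac1l\log W(l)$ vanishes then so does $\frac1l\log V_\delta(l)$ for every $\delta$, which feeds directly into \cref{main}.\labelcref{mainFirstAssertion} to conclude $h_\infty(X)=0$. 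This settles item~\labelcref{first-graph}.

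For item~\labelcref{second-graph} I need the reverse comparison, and this is where quasigeodesicity enters. By \cref{qg}, quasigeodesicity gives a constant $\delta_1>0$ such that any two points $x,x'$ are joined by a $\delta_1$-path of length at most $\ceil{d(x,x')}$; in particular $X$ has a single $\delta$-component for $\delta\ge\delta_1$, and every point of the metric ball $B(x,l)$ can be reached from $x$ by a $\delta_1$-path of length at most $\ceil{l}\le l+1$, so $B(x,l)\subseteq B_{\delta_1}(x,l+1)$ and therefore $W(l)\le V_{\delta_1}(l+1)$. Thus if $\limsup_l\frac1l\log W(l)>0$, the same holds for $\limsup_l\frac1l\log V_{\delta_1}(l+1)$, hence for $\limsup_l\frac1l\log V_{\delta_1}(l)$, which is exactly the hypothesis of \cref{main}.\labelcref{mainSecondAssertion} with $\delta=\delta_1$; that theorem then yields $h_\infty(X)=\infty$.

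The main obstacle is the inequality $W(l)\le V_{\delta_1}(l+1)$, since it is the only step using more than the triangle inequality: it requires the quasigeodesic hypothesis to guarantee that metric balls are captured by $\delta$-balls of comparable step-radius, and one must invoke \cref{qg}\ref{qg2} to control the \emph{length} of the connecting $\delta$-path by the metric distance (rather than merely its existence). The remaining bookkeeping — checking that $V_\delta(l)$ is finite under bounded geometry so that \cref{main} applies, and that rescaling the argument of a function does not change whether its exponential growth rate vanishes — is routine, and the parenthetical ``connected graph with uniformly bounded vertex degrees'' is just the prototypical such $X$, for which $W(l)=\sup_x|B(x,l)|$ and $V_\delta(l)$ coincide when $\delta\in\NN$ as already noted before \cref{main-volume}.
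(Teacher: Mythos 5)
Your proposal is correct and follows essentially the same route as the paper: both directions are deduced from \cref{main} by comparing $V_\delta^x(l)$ with $\sup_{x}|B(x,l)|$, using the trivial inclusion $B_\delta(x_0,l)\subseteq B(x_0,\delta l)$ for item~\labelcref{first-graph} and the quasigeodesic property via \cref{qg}\ref{qg2} (which also forces a single $\delta$-component) for the reverse inclusion in item~\labelcref{second-graph}. The only cosmetic difference is your use of $B(x,l)\subseteq B_{\delta_1}(x,l+1)$ where the paper gets $B(x_0,l)\subseteq B_{\delta'}(x_0,l)$ directly for integer $l$, which changes nothing in the limit.
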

\begin{proof}
Note that if $X$ is a graph such that the degree of vertices is bounded by $d\in \NN$, then the cardinality of any ball of radius $n\in \NN$ is at most $1+d\sum_{i=1}^n(d-1)^{i-1}$ (this upper bound is attained for the infinite $d$-regular tree), in particular $X$ has bounded geometry.

Now, if $X$ is quasigeodesic, by \cref{qg} there exists $\delta > 0$ such that any two points within distance~$n$ from each other can be connected by a $\delta$-path of length~$n$. In particular, the $\delta$-component $[x]_\delta$ of any $x\in X$ is the whole of $X$. 

\smallskip\noindent\emph{Item~\labelcref{second-graph}.~~~} Consequently, for any $\delta'\geq \max(\delta,1)$ and $l\in \NN$ we have
\[V_{\delta'}^x(l) = \sup_{x_0\in [x]_{\delta'}} |B_{\delta'}(x_0, l)| =
\sup_{x_0\in X} |B_{\delta'}(x_0, l)| \geq \sup_{x_0\in X} |B(x_0, l)|.\]
Hence, if $\limsup_{l\to\infty} \frac{1}{l} \log \left(\sup_{x\in X}|B(x,l)|\right) > 0$, then the assumption from \cref{main}.\labelcref{mainSecondAssertion} holds, and thus the coarse entropy is infinite.

\smallskip\noindent\emph{Item~\labelcref{first-graph}.~~~} Now, for an arbitrary $\delta > 0$, using the inclusion $B_\delta(x,l) \subseteq B(x,l\delta)$, we obtain
\[V_{\delta}^x(l) = \sup_{x_0\in [x]_\delta} |B_\delta(x_0, l)| \leq
\sup_{x_0\in X} |B_\delta(x_0, l)| \leq \sup_{x_0\in X} |B(x_0, \delta l)|,\]
so 
\begin{align*}
\limsup_{l\to \infty} \fdfrac{1}{l} \log V_{\delta}^x(l)
&\leq
\limsup_{l\to \infty} \fdfrac{1}{l} \log \left(\sup_{x_0\in X} |B(x_0, \delta l)|\right)
\displaybreak[0]\\&=
\delta \limsup_{l\to \infty} \fdfrac{1}{\delta l} \log \left(\sup_{x_0\in X} |B(x_0, \delta l)|\right)
\displaybreak[0]\\&\leq
\delta \limsup_{L\to \infty} \fdfrac{1}{L} \log \left(\sup_{x_0\in X} |B(x_0, L)|\right).
\end{align*}
Hence, if $\lim_{l\to\infty} \frac{1}{l} \log \left(\sup_{x\in X}|B(x,l)|\right) = 0$, then the assumption from \cref{main}.\labelcref{mainFirstAssertion} holds, and thus the coarse entropy vanishes.
\end{proof}

For a connected vertex-transitive graph of finite degree—e.g.\ the Cayley graph of a finitely generated group with respect to a finite generating set—the dichotomy from \cref{corollary:graph} distinguishes between subexponential and exponential growth.
\begin{cor}\label{vertex-transitive} Let $X$ be a connected vertex-transitive graph with vertices of finite degree. Fix any vertex $x\in X$. We have the following dichotomy:
\begin{enumerate}[(i)]
\item if $\lim_{l\to\infty} \frac{1}{l} \log \left(|B(x,l)|\right) = 0$, then the coarse entropy of $X$ is zero,
\item if $\limsup_{l\to\infty} \frac{1}{l} \log \left(|B(x,l)|\right) > 0$, then the coarse entropy is infinite.
\end{enumerate}
\end{cor}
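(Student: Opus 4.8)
The plan is to deduce \cref{vertex-transitive} directly from \cref{corollary:graph}, by checking that a connected vertex-transitive graph of finite degree satisfies the hypotheses of that theorem and that, for such a graph, the quantity $\sup_{x'\in X}|B(x',l)|$ appearing there reduces to $|B(x,l)|$ for the fixed vertex~$x$. The whole corollary is thus a matter of translating vertex-transitivity into the two structural inputs of \cref{corollary:graph}---bounded geometry and quasigeodesicity---together with the one quantitative input, namely the independence of ball cardinalities from the choice of center.

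The first thing I would record is that every graph automorphism $f$ of $(V,E)$ is an isometry of the path metric, since it preserves the edge relation and hence the length of paths; consequently $f(B(v,l)) = B(f(v),l)$ for every $v\in V$ and every~$l$. Applying this to an automorphism carrying~$x$ to an arbitrary vertex~$x'$ (which exists by vertex-transitivity) yields $|B(x',l)| = |B(x,l)|$ for all $x'$ and all~$l$, and therefore $\sup_{x'\in X}|B(x',l)| = |B(x,l)|$. Under this identity the hypotheses in the two items of \cref{vertex-transitive} become \emph{verbatim} the hypotheses in items~\ref{first-graph} and~\ref{second-graph} of \cref{corollary:graph}.

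Next I would verify the two structural hypotheses. The same center-independence argument, applied with $l=1$, shows that the degree is the same at every vertex; since it is finite at~$x$, it is a finite constant~$d$ throughout, and by the ball-size bound recorded at the start of the proof of \cref{corollary:graph} this yields bounded geometry in the sense of \cref{boundedGeometry}. Moreover $X$ is a connected graph, and every connected graph is quasigeodesic, as noted after \cref{defi:qg}. Hence $X$ is a quasigeodesic space of bounded geometry, so \cref{corollary:graph} applies and delivers the stated dichotomy.

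I do not expect a genuine obstacle here: the only point requiring care is the elementary observation that automorphisms act isometrically and therefore permute balls, which simultaneously supplies the uniform finite degree (hence bounded geometry) and the center-independence that collapses the supremum over basepoints to a single ball. Once these are in place the two cases are exactly items~\ref{first-graph} and~\ref{second-graph} of \cref{corollary:graph}.
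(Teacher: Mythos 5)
Your proposal is correct and matches the paper's (implicit) argument: the paper gives no written proof, treating the corollary as an immediate consequence of \cref{corollary:graph} once one notes that vertex-transitivity makes the degree uniformly finite and collapses $\sup_{x'\in X}|B(x',l)|$ to $|B(x,l)|$. Your write-up simply makes these routine verifications explicit.
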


\begin{cor}\label{groups}
A finitely generated group has exponential growth if and only if it
has infinite coarse entropy as a metric space with the word length distance.
\end{cor}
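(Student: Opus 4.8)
The plan is to reduce the statement to \cref{vertex-transitive} applied to a Cayley graph. First I would fix a finite generating set $S$ of the group $\Gamma$ and work with the associated Cayley graph $\cay(\Gamma, S)$ equipped with the word metric, as in \cref{word-metric}. This graph is connected (because $S$ generates $\Gamma$), has vertices of finite degree (bounded by $2|S|$), and is vertex-transitive (left multiplication by any group element is a graph automorphism carrying any vertex to any other). Hence \cref{vertex-transitive} applies with any chosen basepoint, say the identity $e$, and by vertex-transitivity the growth function $l\mapsto |B(e,l)|$ is the usual growth function of the group, independent of the basepoint.

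The crux is then to match the two hypotheses of \cref{vertex-transitive}, phrased through $\lim_l \frac1l\log|B(e,l)|$ and $\limsup_l \frac1l\log|B(e,l)|$, with the paper's notion of exponential growth, which is defined as equivalence of the volume growth function to $r\mapsto \exp(r)$. The key analytic input is submultiplicativity: every element of word length at most $l+m$ factors as a product of an element of length at most $l$ and one of length at most $m$, so $|B(e,l+m)|\leq |B(e,l)|\cdot |B(e,m)|$. Consequently $l\mapsto \log|B(e,l)|$ is subadditive, and Fekete's lemma guarantees that the growth exponent $\lambda \defeq \lim_{l\to\infty}\frac1l\log|B(e,l)|$ exists and equals $\inf_l \frac1l\log|B(e,l)|$.

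Next I would show that exponential growth is equivalent to $\lambda>0$. If $\lambda>0$, then $|B(e,l)|\geq e^{\lambda l}$ by the infimum characterisation, while trivially $|B(e,l)|\leq (2|S|+1)^l$; the growth function is thus squeezed between two honestly exponential functions and so is equivalent to $\exp(r)$ in the sense of the paper. Conversely, if the growth is equivalent to $\exp(r)$, unwinding the defining inequalities produces a constant $D$ with $\exp(r)\leq D\,|B(e,Dr)|$, whence $\lambda\geq 1/D>0$. Because $\lambda$ exists and all terms $\frac1l\log|B(e,l)|$ are nonnegative, the condition $\lambda=0$ coincides with $\lim_l \frac1l\log|B(e,l)|=0$, and $\lambda>0$ coincides with $\limsup_l \frac1l\log|B(e,l)|>0$; these two cases are exhaustive. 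I would then conclude: exponential growth means $\lambda>0$, triggering item~(ii) of \cref{vertex-transitive} and hence infinite coarse entropy, whereas subexponential growth means $\lambda=0$, triggering item~(i) of \cref{vertex-transitive} and hence zero coarse entropy.

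The only genuine obstacle is the bookkeeping in the third step: carefully translating the paper's equivalence-class definition of exponential growth into the clean condition $\lambda>0$, verifying both implications against the specific notion of equivalence of growth functions. This is routine once Fekete's lemma supplies the existence of $\lambda$, but each direction must be checked explicitly, since without the existence of the limit the phrase ``not of exponential growth'' would not immediately yield the vanishing hypothesis required by \cref{vertex-transitive}.
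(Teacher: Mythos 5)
Your proposal is correct and follows essentially the same route as the paper, which presents \cref{groups} as an immediate consequence of \cref{vertex-transitive} applied to a Cayley graph (the paper leaves the proof implicit). The extra work you supply — using submultiplicativity of ball cardinalities and Fekete's lemma to show that the growth exponent exists and that its positivity is equivalent to exponential growth in the paper's equivalence-class sense — is exactly the bookkeeping the paper omits, and it is carried out correctly.
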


Results parallel to some of those in this section for other entropy notions have appeared in \cites{DikranjanGiordanoBruno,Zava}.

\subsection{Unbounded geometry}\label{sec:unboundedGeometry}

In this section, we consider the less tame case of metric spaces without bounded geometry. One expects that such spaces would have infinite coarse entropy, which is indeed usually true as shown in \cref{graphUnbounded}. Together with the results of the previous section, this yields a complete understanding (\cref{completeQuasigeodesic}) of coarse entropy for quasigeodesic spaces.

Interestingly, in general there exist spaces without bounded geometry whose coarse entropy vanishes, and in \cref{unboundedZero} we even provided a 1-connected example of this kind. This shows that the assumption that $X$ is quasi-geodesic in \cref{graphUnbounded} cannot be relaxed to the assumption that $X$ is $C$-connected for some $C>0$.

\begin{thm}\label{graphUnbounded}
Let $X$ be a quasigeodesic space, e.g.\ a connected graph, and assume that it is not coarsely equivalent to a metric space of bounded geometry. Then $h_\infty(X)=\infty$.
\end{thm}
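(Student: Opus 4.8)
The plan is to build, for every scale $\delta$ large enough and every $R>0$, exponentially many pairwise $R$-separated $\delta$-paths by the out-and-back concatenation trick already used in \cref{second-example} and \cref{main-volume}. Two structural inputs drive the construction. First, since $X$ is quasigeodesic, \cref{qg} supplies a single constant $\delta_0>0$ such that any two points $x,x'$ are joined by a $\delta_0$-path of length at most $\ceil{d(x,x')}$; in particular $X$ is a single $\delta_0$-component. Second, since $X$ is not coarsely equivalent to a space of bounded geometry, the negation of condition~\ref{ceBoundedGeometry-one} in \cref{ceBoundedGeometry} yields, for every separation scale $s>0$, a diameter bound $D_s>0$ for which there exist $s$-separated subsets of diameter at most $D_s$ of arbitrarily large cardinality.

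Next I would fix $\delta\geq\delta_0$ and $R>0$, apply the second input with $s=R$ to obtain $D\defeq D_R$, and for a prescribed $N\in\NN$ choose an $R$-separated set $S\subseteq X$ with $|S|\geq N$ and $\diam S\leq D$. Picking a hub $h\in S$, I use the first input to connect $x_0$ to $h$ by a $\delta_0$-path $o$ of some finite length $L$, and to connect $h$ to each $a\in S$ by a $\delta_0$-path of length at most $\ceil{d(h,a)}\leq\ceil{D}$, which I pad (by repeating its endpoint) to a $\delta_0$-path $o_a$ of length exactly $\ell\defeq\ceil{D}$ ending at $a$. The crucial point is that $\ell$ depends only on $R$, not on $N$: this is exactly where quasigeodesicity is indispensable, since it bounds the $\delta_0$-path length between the separated points by their (uniformly bounded) metric distance.

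Then, for each $p\in\NN$ and each choice $a_1,\dots,a_p\in S$, I form the concatenation $o*(o_{a_1}*o_{a_1}^{-1})*\dots*(o_{a_p}*o_{a_p}^{-1})$, a $\delta$-path from $x_0$ of length $n_p\defeq L+2\ell p$. Since all round trips have equal length $2\ell$, two such paths coming from distinct sequences differ in some factor $k$, and at the common midpoint time $L+\ell(2k-1)$ of the $k$-th round trip they sit at distinct points of $S$, hence at distance $\geq R$; so these $|S|^p\geq N^p$ paths are pairwise $R$-separated and $s(n_p,R,\delta,x_0)\geq N^p$. As the prefix length $L$ drops out in the limit, this gives $\limsup_{n\to\infty}\frac1n\log s(n,R,\delta,x_0)\geq\lim_{p\to\infty}\frac{p\log N}{L+2\ell p}=\frac{\log N}{2\ceil{D}}$. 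Because $N$ was arbitrary while $\ceil{D}=\ceil{D_R}$ is independent of $N$, the left-hand side—a fixed quantity once $\delta$ and $R$ are fixed—must be infinite for every $\delta\geq\delta_0$ and every $R$; taking the limits over $R$ and $\delta$ in \eqref{spaceEntropySeparated} then gives $h_\infty(X)=\infty$. (Alternatively, having produced positive entropy one could simply invoke \cref{zeroinf}.)

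The hard part is organizing the two inputs so that the per-round-trip length $\ell$ is controlled \emph{uniformly} in the cardinality $N$ of the separated set; this uniform control is precisely what the quasigeodesic hypothesis buys, and \cref{unboundedZero} shows the conclusion genuinely fails without it (there the separated points are metrically close but $\delta$-path-far). The remaining work—padding the paths to a common length and checking the midpoint separation—is routine bookkeeping.
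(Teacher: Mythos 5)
Your proposal is correct and follows essentially the same route as the paper's own proof: the negation of condition (i) in \cref{ceBoundedGeometry} supplies arbitrarily large $R$-separated sets of uniformly bounded diameter, quasigeodesicity (via \cref{qg}) converts that diameter bound into a uniform bound on the length of the connecting $\delta$-paths, and the out-and-back concatenation through a hub yields $N^p$ pairwise $R$-separated $\delta$-paths of length $L+2\ell p$ with $\ell$ independent of $N$. Your explicit midpoint-separation check and the final limit computation match the paper's argument in all essentials.
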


\begin{proof}
(Compare the proof of \cref{main-volume} and \cref{measure-example}.)
By \cref{ceBoundedGeometry}, for every $s > 0$ there is $D_s \in \NN$ and a sequence $(A_k^s)_{k=1}^\infty$ of $s$-separated sets in $X$ of diameter at most $D_s$ such that $\lim_k |A_k^s| = \infty$. For every~$A_k^s$ pick a basepoint $x_k^s\in A_k^s$.

Fix a ``global'' basepoint $x_0\in X$, $R > 0$, and $\delta > 0$ sufficiently large that item~\ref{qg2} of \cref{qg} holds. Let $k\in \NN$.  Let $o$ be any $\delta$-path between $x_0$ and $x_k^R$ and let~$l$ denote its length. For every $a\in A_k^R$ there is a $\delta$-path $o_a$ connecting $x_k^R$ and $a$ with length at most ${D_R}$, and by repeating its entries if necessary it can be lengthened to a $\delta$-path $o_a$ of length exactly $D_R$ connecting $x_k^R$ and $a$ . For any $p\in \NN$ consider the following set of $\delta$-paths:
\[\mathcal{O}^{(p)} = \big\{o * o_{a_1} * o_{a_1}^{-1} * \ldots * o_{a_p} * o_{a_p}^{-1} : \forall i \in\{1,\ldots, p\} \; a_i \in A_k^R \big\}.\]
The cardinality of $\mathcal{O}^{(p)}$ equals $|A_k^R|^p$, it consists of paths of length $l+2p {D_R}$, and it is $R$-separated because $A_k^R$ is $R$-separated. Thus we get
\begin{align*}
h_\infty(X) &\geq
\lim_{\delta\to \infty} \lim_{R \to \infty} \limsup_{p\to \infty} \frac{\log |\mathcal{O}^{(p)}|}{l+2p {D_R}}
\\&=
\lim_{\delta\to \infty} \lim_{R \to \infty} \limsup_{p\to \infty} \frac{p \log |A_k^R|}{l+2p {D_R}} 
=
\lim_{\delta\to \infty} \lim_{R \to \infty} \frac{\log |A_k^R|}{2 {D_R}},
\end{align*}
and since $k$ was arbitrary and $\lim_{k\to\infty} |A_k^s| = \infty$ for every $s\in \NN$, we conclude
\[
h_\infty(X) \geq
\lim_{\delta\to \infty} \lim_{R \to \infty} \lim_{k\to \infty} \frac{\log |A_k^R|}{2 {D_R}} = \infty.\qedhere
\]
\end{proof}

\Cref{graphUnbounded} together with \cref{main} yields a complete classification for quasigeodesic spaces.

\begin{cor}\label{completeQuasigeodesic}
Let $X$ be a quasigeodesic space, e.g.\ a connected graph.
\begin{enumerate}[(a)]
\item If $X$ is not coarsely equivalent to a space with bounded geometry, then the coarse entropy of $X$ is infinite.
\item Otherwise, there exists a connected bounded-degree graph $G$ quasi-isometric to $X$, and the usual dichotomy holds:
\begin{enumerate}[(i)]
\item if $\lim_{l\to\infty} \frac{1}{l} \log \left(\sup_{x\in G}|B(x,l)|\right) = 0$, then the coarse entropy of $X$ is zero,
\item if $\limsup_{l\to\infty} \frac{1}{l} \log \left(\sup_{x\in G}|B(x,l)|\right) > 0$, then the coarse entropy of $X$ is infinite.
\end{enumerate}
\end{enumerate}
\end{cor}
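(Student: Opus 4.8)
The plan is to combine the two main theorems of this section — \cref{graphUnbounded} and \cref{main} — exactly along the logical split suggested by the statement, first dividing into the two cases according to whether $X$ is coarsely equivalent to a space with bounded geometry.

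First I would dispatch part~(a) immediately: if $X$ is quasigeodesic and \emph{not} coarsely equivalent to a space with bounded geometry, then \cref{graphUnbounded} applies verbatim and gives $h_\infty(X)=\infty$. This is the ``bad geometry forces infinite entropy'' half, and it requires no further work beyond invoking the theorem.

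For part~(b), I would assume $X$ \emph{is} coarsely equivalent to a space with bounded geometry. The first step is to produce the promised graph $G$. Since $X$ is quasigeodesic and coarsely equivalent to a bounded-geometry space, the ``moreover'' clause of \cref{qg} (specifically the last sentence, invoking the implication $\ref{ceBoundedGeometry-four}\Rightarrow\ref{ceBoundedGeometry-three}$ of \cref{ceBoundedGeometry}) yields a connected graph $G$ with uniformly bounded vertex degrees that is quasi-isometric to $X$. Because coarse entropy is a coarse invariant (\cref{coequiv}), we have $h_\infty(X)=h_\infty(G)$, so it suffices to compute the coarse entropy of $G$. The bounded-degree graph $G$ is itself quasigeodesic and of bounded geometry, so it falls squarely under the hypotheses of \cref{corollary:graph}. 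Applying that corollary to $G$ gives precisely the stated dichotomy: sub­exponential growth of $\sup_{x\in G}|B(x,l)|$ yields zero coarse entropy, and exponential growth yields infinite coarse entropy.

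I do not expect any genuine obstacle here, since this corollary is purely an assembly of previously established results; the only point requiring a little care is making sure the graph $G$ extracted in part~(b) simultaneously has bounded degree (hence bounded geometry) and is quasi-isometric to $X$, so that \cref{corollary:graph} may legitimately be applied to $G$ and the conclusion transported back to $X$ via quasi-isometry invariance. That bookkeeping is exactly what the ``moreover'' part of \cref{qg} supplies, so the proof reduces to correctly citing the right implications in the right order.
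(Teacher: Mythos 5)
Your proposal is correct and follows essentially the same route as the paper: part~(a) by direct appeal to \cref{graphUnbounded}, and part~(b) by extracting a bounded-degree graph $G$ quasi-isometric to $X$ via the ``moreover'' clause of \cref{qg}, then transporting the dichotomy of \cref{corollary:graph} back to $X$ using \cref{coequiv}. No gaps.
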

\begin{proof}
If $X$ is not coarsely equivalent to a space of bounded geometry, then \cref{graphUnbounded} applies, so $h_\infty(X) = \infty$.

If $X$ is coarsely equivalent to a space of bounded geometry, then it follows from \cref{qg} that there exists a bounded degree graph that is quasi-isometric to~$X$. By \cref{coequiv}, the rest of the claim follows immediately from \cref{corollary:graph}.
\end{proof}

\section*{Data availability statement}

Data sharing not applicable to this article as no datasets were generated or analysed during the current study.

\begin{bibsection}
\begin{biblist}
\bib{DikranjanGiordanoBruno}{article}{
   author={Dikranjan, Dikran},
   author={Giordano Bruno, Anna},
   title={Topological entropy and algebraic entropy for group endomorphisms},
   book={
       title = {Proceedings ICTA2011},
       publisher = {Cambridge Scientific Publishers},
       date={2012}
   },
   conference={
       title={International Conference on Topology and its Applications ICTA 2011},
       address={Islamabad, Pakistan, July 2011}
   },
   pages={133--214},
}

\bib{GM}{article}{
   author={Geller, William},
   author={Misiurewicz, Micha\l },
   title={Coarse entropy},
   journal={Fund. Math.},
   volume={255},
   date={2021},
   number={1},
   pages={91--109},
   issn={0016-2736},
}

\bib{Guentner--Kaminker}{article}{
   author={Guentner, Erik},
   author={Kaminker, Jerome},
   title={Exactness and uniform embeddability of discrete groups},
   journal={J. London Math. Soc. (2)},
   volume={70},
   date={2004},
   number={3},
   pages={703--718},
   issn={0024-6107},
}

\bib{HMT}{article}{
   author={Hume, David},
   author={Mackay, John M.},
   author={Tessera, Romain},
   title={Poincar\'{e} profiles of groups and spaces},
   journal={Rev. Mat. Iberoam.},
   volume={36},
   date={2020},
   number={6},
   pages={1835--1886},
   issn={0213-2230},
}

\bib{LMR}{article}{
   author={Lubotzky, Alexander},
   author={Mozes, Shahar},
   author={Raghunathan, M. S.},
   title={The word and Riemannian metrics on lattices of semisimple groups},
   journal={Publ. Math. Inst. Hautes Études Sci.},
   volume={91},
   date={2000},
   pages={5--53},
   issn={0073-8301},
}

\bib{NY}{book}{
   author={Nowak, Piotr W.},
   author={Yu, Guoliang},
   title={Large scale geometry},
   series={EMS Textbooks in Mathematics},
   publisher={European Mathematical Society (EMS), Z\"{u}rich},
   date={2012},
   pages={xiv+189},
   isbn={978-3-03719-112-5},
   doi={10.4171/112},
}

\bib{thesis}{thesis}{
  author = {Sawicki, Damian},
  title = {On the geometry of metric spaces defined by group actions: from circle rotations to super-expanders},
  school = {Institute of Mathematics of the Polish Academy of Sciences},
  date = {2018},
  type = {PhD thesis},
}

\bib{Smith}{article}{
   author={Smith, J.},
   title={On asymptotic dimension of countable Abelian groups},
   journal={Topol. Appl.},
   volume={153},
   date={2006},
   number={12},
   pages={2047--2054},
}

\bib{Zava}{article}{
   author={Zava, Nicol\`o},
   title={On a notion of entropy in coarse geometry},
   journal={Topol. Algebra Appl.},
   volume={7},
   date={2019},
   number={1},
   pages={48--68},
}

\end{biblist}
\end{bibsection}

\end{document}